\documentclass[11pt]{amsart}
\usepackage{graphicx}
\usepackage{amsmath,amsfonts,amssymb,amsthm,mathtools,pifont,float}

\usepackage{tikz-cd,quiver}

\usepackage[margin=1.5in]{geometry}
\usepackage{array,pbox,multicol,longtable,multirow}

\usepackage{verbatim,fancyvrb,fvextra}
\usepackage{soul} 
\usepackage{MnSymbol} 
\usepackage{hyperref}
\hypersetup{
    colorlinks=true,
    linkcolor=black,
    urlcolor=black,
    citecolor=black
    }

\usepackage[dvipsnames]{xcolor}

\newtheorem{thm}{Theorem}[section]
\newtheorem{lemma}[thm]{Lemma}
\newtheorem{prop}[thm]{Proposition}
\newtheorem{coro}[thm]{Corollary}

\newtheorem{example}[thm]{Example}

\newtheorem{remark}[thm]{Remark}
    
\newcommand{\am}{\operatorname{Am}}
\newcommand{\aut}{\operatorname{Aut}}
\newcommand{\im}{\operatorname{Im}}

\newcommand{\pic}{\operatorname{Pic}}

\newcommand{\pp}{\mathbb{P}}
\newcommand{\zz}{\mathbb{Z}}
\newcommand{\cc}{\mathbb{C}}
\newcommand{\autp}{\operatorname{AutP}}

\numberwithin{equation}{section} 

\author{Shreya Sharma}
\address{Shreya Sharma, University of South Carolina, SC, USA.}
\email{shreyas@email.sc.edu}

\title{Amitsur groups of primitive Fano Threefolds}

\begin{document}
\begin{abstract}
    We classify possible Amitsur groups of smooth primitive Fano threefolds defined over $\cc$ that admit a faithful action of a finite group. We also classify the Amitsur groups for toric Fano threefolds.
\end{abstract}

\maketitle

\section{Introduction}
Let $X$ be a proper variety over $\cc$ with a faithful action of a finite group $G$, and $\mathcal{L}$ be a line bundle on $X$. We say $\mathcal{L}$ is $G$-invariant if there are isomorphisms $\phi_g: g^*\mathcal{L}\to \mathcal{L}$ for all $g\in G$. If the isomorphisms $\{\phi_g\}_{g\in G}$ can be chosen such that $\phi_{gh}= \phi_h \circ h^*(\phi_g)$ for all $g,h \in G$, then we say $\mathcal{L}$ is $G$-linearizable. In other words, we say $\mathcal{L}$ is $G$-linearizable if the $G$-action on $X$ lifts to a fiber-wise linear action on the total space of $\mathcal{L}$. A particular choice of this lift is called a linearization of $\mathcal{L}$. 

A morphism of $G$-linearizable line bundles is a morphism of line bundles that is equivariant as a map on their total spaces. We denote the group of isomorphism classes of $G$-invariant line bundles on $X$ by $\pic(X)^G$ and the group of isomorphism classes of $G$-linearizable line bundles with a choice of linearization by $\pic(X,G)$. Clearly, every $G$-linearizable line bundle is also $G$-invariant, so there is a forgetful group homomorphism, $\pic(X,G) \to \pic(X)^G$. 

The \emph{Amitsur subgroup} measures the failure of line bundles to be $G$-linearizable. More precisely, given a line bundle $\mathcal{L}\in \pic(X)^G$, we can construct a cohomology class, $\partial(\mathcal{L})\in H^2(G,\mathbb{C}^\times)$; see \cite[\S A]{BCDP}) that gives the exact sequence of abelian groups 
\begin{equation}    \label{Exactsequence:Amitsur}
1 \rightarrow \operatorname{Hom}(G,\mathbb{C}^\times) \rightarrow \pic(X,G) \xrightarrow{\epsilon} \pic(X)^G \xrightarrow{\partial} H^2(G,\mathbb{C}^\times).
\end{equation}
The Amitsur subgroup is defined as the image of the map $\partial$, 
\[
\am(X,G):= \im(\partial). 
\]
From \eqref{Exactsequence:Amitsur}, we note that this group is isomorphic to $\operatorname{coker}(\pic(X,G) \to \pic(X)^G)$. In this article, we will be mainly concerned with the isomorphism classes of $G$-linearized line bundles rather than the embedding into $H^2(G,\cc^\times)$, therefore, we will refer the Amitsur subgroup as the \emph{Amitsur group}.

The Amitsur group has been shown to be an equivariant birational invariant in \cite{BCDP}, where the authors use it to distinguish conjugacy classes of finite subgroups of $\operatorname{Cr}_3(\cc)$. It provides an obstruction to equivariant versions of rationality, such as linearizability; see for example, \cite[Lemma 2.2]{cheltsov2025equivariantunirationalityfanothreefolds}. Dolgachev determines the Amitsur groups for smooth curves in \cite{dolga-modular-curves}. In \cite[A.7]{BCDP}, the authors classify the possible Amitsur groups for rational surfaces. It is therefore natural to investigate these groups in dimension $3$. 

\medskip
A Fano threefold is imprimitive if it is isomorphic to the blow-up of a Fano threefold along an irreducible smooth curve, otherwise it is \emph{primitive}. By Mori-Mukai classification, the Picard rank $\rho$ of a smooth primitive Fano threefold is at most $3$ \cite{MM81classtable}. There are $17$ deformation families of smooth Fano threefolds with $\rho=1$,  also called \emph{prime} Fano threefolds, and there are $13$ deformation families of smooth primitive Fano threefolds with $\rho>1$.

The main objective of this article is to classify the possible Amitsur groups of smooth primitive Fano threefolds. In \cite[\S 6]{duncan2025numericalamitsurgroup}, the authors provide a method for classifying the Amitsur groups of a smooth projective toric variety; see also \cite{kresch2025equivariantunirationalitytoricvarieties}. As an application, we also give a classification of possible Amitsur groups of smooth toric Fano threefolds.

More precisely, 
\begin{thm}
Let $X$ be a smooth Fano threefold in any of the following families
\begin{center}
    \textnumero 1.1,\; \dots \; \textnumero 1.17,\; \textnumero 2.2,\; \textnumero 2.6,\; \textnumero 2.8,\; \textnumero 2.18,\; \textnumero 2.24,\; \textnumero 2.32,\; \textnumero 2.33,\;  \textnumero 2.34,\;  \textnumero 2.35,\;  \textnumero  2.36,\;  \textnumero 3.1,\;   \textnumero 3.4,\;  \textnumero 3.25,\;  \textnumero 3.26,\; \textnumero 3.27,\;   \textnumero 3.28,\;  \textnumero 3.29,\;  \textnumero 3.30,\; \textnumero  3.31,\;  \textnumero 4.9,\; \textnumero 4.10,\;
   \textnumero 4.11,\; \textnumero 4.12,\; \textnumero 5.2,\; \textnumero 5.3.
\end{center}
Let $G\subseteq\aut(X)$ be a finite group. Then the Tables \ref{table:primitive} and \ref{table:toric} list the largest possible Amitsur group $\am(X,G)$. Moreover, for each entry in tables, there is an $X$ and a group $G$ for which that possibility occurs. 
\end{thm}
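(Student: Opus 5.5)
The plan has two halves: bound $\am(X,G)$ from above family by family, then realise each bound with an explicit pair $(X,G)$.

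For the upper bound I will use two general facts that follow from the exact sequence \eqref{Exactsequence:Amitsur}.

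First, $\am(X,G)$ is the image of $\pic(X)^G$, so it is generated by the images of generators of $\pic(X)^G$.

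Second, $\partial(-K_X)=0$, because $G$ acts on $\omega_X$ and hence $\omega_X$ carries a canonical linearization. Consequently, if $\pic(X)^G$ is generated by $-K_X$ together with classes $L_1,\dots,L_r$, then $\am(X,G)$ is generated by the $\partial(L_i)$. Moreover, if $m_iL_i$ lies in the subgroup generated by $K_X$ and classes already known to be linearizable, then $\partial(L_i)$ is $m_i$-torsion.

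I will also use that $\partial$ is compatible with equivariant morphisms: pullbacks of linearizable bundles stay linearizable. So if $L$ is the pullback of $\mathcal{O}(1)$ along a $G$-equivariant morphism to a projective space $\pp(V)$, where $G$ lifts to $GL(V)$, then $\partial(L)=0$.

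With these facts the argument goes through the families.

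\textbf{Prime Fano threefolds} ($\rho=1$). Here $\pic(X)=\zz H$ and $-K_X=\iota H$, where $\iota$ is the Fano index. So $\am(X,G)$ is cyclic of order dividing $\iota$.
\begin{itemize}
\item For $\iota=1$ (families \textnumero 1.1--1.10) the group is trivial.
\item For $\pp^3$ ($\iota=4$) we get $\zz/4$, realised by a lift of the Heisenberg-type group $(\zz/4)^2\subset PGL_4$ whose Schur class has order $4$.
\item For the quadric ($\iota=3$) the hyperplane class comes from the embedding $Q\subset\pp^4$. Since $\aut(Q)=PSO_5=SO_5$ lifts to $GL_5$, the group is trivial.
\item For the del Pezzo threefolds of index $2$ (\textnumero 1.11--1.15) I will examine how $\aut(X)$ acts on the anticanonical model or on the half-anticanonical embedding. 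Linearizability of $H$ follows when $H^0(X,H)$ is acted on by a central extension that splits, or when $h^0(H)$ and the $\iota$-torsion force the obstruction to vanish. Otherwise I expect $\zz/2$: for example, for the cubic, $H$ gives $X\subset\pp^4$, so the obstruction vanishes, whereas for \textnumero 1.11 or 1.12 a $\zz/2$ may survive.
\end{itemize}

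\textbf{Higher Picard rank.} For the primitive families with $\rho>1$ and the toric families, $\pic(X)$ is generated by pullbacks along the extremal contractions, which are conic bundles or $\pp^1$- or $\pp^2$-bundles over $\pp^1$, $\pp^2$ or $\pp^1\times\pp^1$. First I compute the image of $G$ in $\aut(\pic X)$, which is determined by the symmetry of the Mori cone. Then $\pic(X)^G$ is the sublattice invariant under that image. For each generator of the invariant lattice, its order modulo $\langle K_X\rangle$ and the linearizable pullbacks bounds the order of its $\partial$-class. Pullbacks from a factor $\pp^1$ contribute at most $\zz/2$, and pullbacks from $\pp^2$ at most $\zz/3$.

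For the toric families I will instead apply the method of \cite[\S 6]{duncan2025numericalamitsurgroup}. Every $G\subset\aut(X)$ lies in an extension of a group $W$ of fan automorphisms by the torus $T$. The maximal Amitsur group can then be read off from the map $H^1(W,\text{characters})\to H^2$ computed from the fan combinatorics. I will compute this for each of the $18$ fans, taking $G=T[n]\rtimes W$ with $n$ chosen large.

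\textbf{Realisation.} For each table entry I will take the finite subgroup $G$ that attains the bound: typically $\mu_n^k\rtimes W$ in the torus normaliser, or a lifted Heisenberg group acting on a factor $\pp^1$ or $\pp^2$. Then I will check that $\partial(L)$ is nonzero by showing that the induced central extension of $G$ acting on $H^0(X,L)$ does not split. This check reduces to the known Schur class of the Heisenberg group.

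\textbf{Main obstacle.} I expect the hardest part to be the nontoric families with nontrivial automorphism groups, namely the index-$2$ del Pezzo threefolds and \textnumero 2.6, 2.8, 2.18 and 3.1. There the achievable $G$ is not explicit, and deciding sharpness requires a case analysis of the possible finite groups and their lifts. For these I would first test the natural candidate groups, such as $\zz/2^k$ sign changes on the defining equations and Heisenberg groups acting on a factor $\pp^2$.
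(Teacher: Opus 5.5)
Your tools for the upper bounds are the paper's: $\partial(K_X)=0$, pullbacks of $K_{\pp^n}$ along $G$-equivariant morphisms, the dimension bound of Proposition~\ref{Gsubrepn_result}, and \cite{duncan2025numericalamitsurgroup} for the toric fans. The gap is that the theorem asserts specific values and their realizations, and you stop before producing either.

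\textbf{Upper bounds.} Where you do commit, you are wrong. For \textnumero 1.11 no $\zz/2\zz$ survives. Since $h^0(X,H)=3$, Proposition~\ref{Gsubrepn_result} applied to all of $H^0(X,H)$ gives $3\partial(H)=0$. Together with $2\partial(H)=\partial(-K_X)=0$ this gives $\am(X,G)=0$; the paper reaches the same conclusion via a $G$-fixed point of $|3H|$. More generally, $\partial(H)$ has order dividing $\gcd(r,h^0(X,H))$, where $r$ is the index. This settles the whole rank-one column: $0$ for 1.11, 1.13, 1.15, 1.16; at most $\zz/2\zz$ for 1.12 and 1.14; at most $\zz/4\zz$ for $\pp^3$.

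The same bookkeeping gives the rest of Table~\ref{table:primitive}, but it must be done family by family. In \textnumero 2.8, the conic bundle over $\pp^2$ together with $K_X$ only yields $\zz/3\zz$. The missing relation $4\partial(\pi^*f^*H)=0$ comes from the non-extremal morphism $X\to\pp^3$ given by $|\pi^*f^*H|$. In \textnumero 3.4 you need $E=K_{X'}-\phi^*K_X$.

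Your toric reduction is false: a finite $G\subseteq\aut(X)$ need not normalize a maximal torus. For example, $A_5\subset\operatorname{PGL}_2$ can act on one factor of $(\pp^1)^3$. The bound must instead come from the numerical-Amitsur theorem of \cite{duncan2025numericalamitsurgroup}. Likewise, which groups $\autp(X,G)$ occur (the rows of the tables) is not read off from Mori-cone symmetry; the paper imports it from \cite{sharma2025actionspicardgroupsmooth}.

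\textbf{Realizations.} The existence half is the real content, and your plan leaves it open. For each nontrivial non-toric entry (\textnumero 1.12, 1.14, 2.6, 2.24, 2.32, 3.1) you must exhibit $G$ together with a \emph{smooth} $G$-invariant member of the relevant linear system on which the action does not lift. The paper does this as follows:
\begin{itemize}
\item for 1.12, the Fermat quartic with the group $(\zz/4\zz)^2$;
\item for 1.14, an explicit invariant pencil of quadrics in $\pp^5$;
\item for 2.6, 2.24 and 2.32, explicit $(\zz/3\zz)^2$-invariant divisors on $\pp^2\times\pp^2$.
\end{itemize}
``Test natural candidate groups'' is not a proof.

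Your proposed check also has a concrete trap for double covers $\pi\colon X\to Y$ branched in $B\in|2L|$. The group acting on $X$ is the preimage $\widetilde G$, an extension of $G$ by the deck involution. Since $\widetilde G$ is the subgroup of the lifting group $G_L$ fixing the equation of $B$, $G_L$ is the pushout of $\widetilde G$ along $\{\pm1\}\subset\cc^\times$. Hence $\partial_X(\pi^*D)=\operatorname{Inf}(\partial_Y(D))$, the kernel of $\operatorname{Inf}\colon H^2(G,\cc^\times)\to H^2(\widetilde G,\cc^\times)$ is generated by $\partial_Y(L)$, and
\[
\am(X,\widetilde G)=\am(Y,G)/\langle\partial_Y(L)\rangle .
\]
So a non-liftable action on the base is not enough. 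For 1.12 this is exactly why $\zz/4\zz$ drops to $\zz/2\zz$ in the paper.

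For 3.1 it rules out Klein four-groups, because $H^2((\zz/2\zz)^2,\cc^\times)\cong\zz/2\zz$. For the groups $\mathcal{G}_1,\mathcal{G}_2$ used in the paper, one checks that $\partial_Y(H_1+H_2+H_3)$ is the nonzero class, so $\am(X,\widetilde G)=0$ there, and it can never be $(\zz/2\zz)^2$. To realize the 3.1 entries you need two things: a larger $G$ for which the images of the $\partial_Y(H_i)$ in $\am(Y,G)/\langle\partial_Y(H_1+H_2+H_3)\rangle$ still generate the bound, and a smooth invariant $(2,2,2)$-divisor.
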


\begin{remark}
 \normalfont  For a smooth Fano threefold in family \textnumero 3.2, Lemma \ref{Am:3.2upper} provides an upper bound on $\am(X,G)$. At present, it is not known whether this bound is sharp.
\end{remark}

This document is organized as follows. In \S 2 we recall basic facts on linearization of line bundles on smooth varieties. An important ingredient in the computation of Amitsur groups is the group $\autp(X)$; we recall in \S 2 its definition and the classification for smooth primitive Fano threefolds. In \S 3 we apply the methods of \cite{duncan2025numericalamitsurgroup} to determine the possible Amitsur groups of smooth toric Fano threefolds. In \S 4 we study the Amitsur groups of smooth Fano threefolds with Picard rank $1$. The remaining sections focus on the Picard rank $\rho =2, 3$. Section 5 treats smooth primitive Fano threefolds that arise as a smooth divisor on fourfolds. In \S 6 we discuss the family \textnumero 3.2, while \S 7 concerns the double covers. Finally, in \S8 we present tables listing the possible Amitsur groups of all smooth primitive and toric Fano threefolds; see Tables~\ref{table:primitive} and~\ref{table:toric}.

\section{Preliminaries}

\subsection{Notation and Conventions} We work over the field of complex numbers $\cc$. For a finite group $G$, we say $X$ is a $G$-variety if $G$ acts on $X$ by morphisms. A morphism (or, a rational map) $\phi: X \to Y$ of $G$-varieties $X$ and $Y$ is said to be $G$-equivariant if $\phi(g x)=g\phi(x)$ for all $g\in G$ and $x\in X$. 

Let $X$ be a smooth projective $G$-variety. Recall that the Amitsur group is defined as
\[
\am(X,G):= \im \partial \cong \operatorname{coker}(\pic(X,G)\to \pic(X)^G).
\]
For a thorough account on linearization of line bundles, see \cite[\S 2]{duncan2025numericalamitsurgroup}. \medskip

We use the notation $\overline{D}$ to denote the image of a divisor class $D\in \pic(X)^G$ under the map $\pic(X)^G \to \am(X,G)$. A deformation family of smooth Fano threefolds is denoted by \textnumero $\rho$.N, where $\rho$ is the Picard rank and $N$ is the number in the Mori-Mukai classification table (\cite{MM81classtable, MMerratum, fanography}). We will only work with smooth members of these families and assume that the $G$-action is faithful, that is, $G\subseteq \aut(X)$.

\medskip
The following instrumental fact says that the canonical divisor, $K_X$ is $G$-linearizable for any $G$. 
\begin{prop}\cite[Proposition 2.11]{BCDP}   \label{KXlinearizable}
    If $X$ is a smooth $G$-variety, then the canonical bundle has a canonical linearization. 
\end{prop}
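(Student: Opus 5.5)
The plan is to write down the linearization directly, using the functoriality of pulling back differential forms; no deep input is needed. For each $g\in G$, regard $g$ as an automorphism $g\colon X\to X$. Since $X$ is smooth, the sheaf of Kähler differentials $\Omega_X$ is locally free of rank $n=\dim X$, and $K_X$ corresponds to the line bundle $\omega_X=\det\Omega_X=\Omega_X^n$. Any morphism $f\colon Y\to X$ comes with a canonical map $df^*\colon f^*\Omega_X\to\Omega_Y$ (pullback of forms), and this map is an isomorphism when $f$ is étale, in particular when $f$ is an automorphism. Taking top exterior powers gives a canonical isomorphism
\[
\phi_g:=\det(dg^*)\colon g^*\omega_X\xrightarrow{\ \sim\ }\omega_X ,
\]
which in local coordinates sends $g^*(u\,dx_1\wedge\dots\wedge dx_n)$ to $(u\circ g)\,d(x_1\circ g)\wedge\dots\wedge d(x_n\circ g)$. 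This already shows that $\omega_X$ is $G$-invariant.

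Next I will check the cocycle condition $\phi_{gh}=\phi_h\circ h^*(\phi_g)$. For composable morphisms $Z\xrightarrow{h}Y\xrightarrow{g}X$, the chain rule for Kähler differentials says that the canonical map $(g\circ h)^*\Omega_X\to\Omega_Z$ factors as
\[
h^*g^*\Omega_X\xrightarrow{h^*(dg^*)} h^*\Omega_Y\xrightarrow{dh^*}\Omega_Z .
\]
Both maps in this factorization are canonical, so the factorization is compatible with the identification $(gh)^*=h^*g^*$, where $gh$ denotes the composition $g\circ h$. Applying $\det$, which is a functor and commutes with pullback, gives exactly $\phi_{gh}=\phi_h\circ h^*(\phi_g)$. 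For the identity element, $\phi_e=\mathrm{id}$. Hence $\{\phi_g\}_{g\in G}$ is a $G$-linearization of $\omega_X$.

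Finally, the linearization is canonical because no choices were made: each $\phi_g$ depends only on the map $g$. Equivalently, on the total space of $\omega_X$ the group acts by $\sigma\mapsto (g^{-1})^*\sigma$ on top forms, and this action is fiberwise linear.

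The only delicate point is bookkeeping. The author's convention $\phi_{gh}=\phi_h\circ h^*(\phi_g)$ corresponds to either a left or a right action, depending on whether $gh$ means $g\circ h$ or the reverse. If the convention turns out to be the opposite one, I would replace $\phi_g$ by the construction using $g^{-1}$, equivalently $(dg)^{-1}$ on the tangent bundle. With that adjustment the same chain-rule argument goes through verbatim.
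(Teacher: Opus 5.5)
Your argument is correct and is the standard one: $G$ acts on $\omega_X=\Omega_X^{\dim X}$ by pullback of top-degree forms, and the chain rule $d(g\circ h)^*=dh^*\circ h^*(dg^*)$ gives exactly the cocycle condition $\phi_{gh}=\phi_h\circ h^*(\phi_g)$ with $gh=g\circ h$, which is the convention the paper's formula already uses, so no adjustment via $g^{-1}$ is needed. The paper gives no proof of its own and only cites \cite[Proposition 2.11]{BCDP}, whose justification is this same natural action of $G$ on the (co)tangent bundle and its determinant.
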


As an immediate consequence, we have

\begin{coro}
    Let $G \subset \operatorname{PGL}_{n+1}(\mathbb{C})$ be a finite group. Then $\am(\pp^n,G)$ is isomorphic to a subgroup of $\zz/(n+1)\zz$. 
\end{coro}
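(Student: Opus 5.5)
The plan is to combine the exact sequence \eqref{Exactsequence:Amitsur} with Proposition \ref{KXlinearizable}.

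First, identify the relevant Picard groups. We have $\pic(\pp^n)\cong\zz$, generated by the hyperplane class $H=\mathcal{O}(1)$. Every automorphism of $\pp^n$ acts trivially on $\pic(\pp^n)$, since it preserves degree. Hence $\pic(\pp^n)^G=\pic(\pp^n)=\zz H$ for any finite $G\subset\operatorname{PGL}_{n+1}(\cc)$.

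Next, apply the exact sequence. It gives
\[
\am(\pp^n,G)\cong \operatorname{coker}\bigl(\pic(\pp^n,G)\to \pic(\pp^n)^G\bigr),
\]
which is a quotient of $\zz H$. It is therefore cyclic, generated by the class $\overline{H}$. It suffices to show that $(n+1)H$ lies in the image of $\epsilon$, because then $\overline{H}$ has order dividing $n+1$.

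Now use the canonical bundle. We have $K_{\pp^n}=\mathcal{O}(-n-1)=-(n+1)H$. By Proposition \ref{KXlinearizable}, $K_{\pp^n}$ carries a canonical $G$-linearization, so $-(n+1)H$ lies in the image of $\epsilon$. Since the image of $\epsilon$ is a subgroup, $(n+1)H$ lies in it as well.

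Consequently $\am(\pp^n,G)$ is a quotient of $\zz/(n+1)\zz$. Every quotient of a finite cyclic group is isomorphic to a subgroup of that group. This gives the claim.

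The only step needing care is the invariance of $H$, and that is immediate from the degree argument. Alternatively, one can lift $G$ to its preimage in $\operatorname{SL}_{n+1}$, but this is unnecessary here.
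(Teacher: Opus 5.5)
Your proposal is correct and follows the same approach as the paper, whose one-line proof just observes that $-K_{\pp^n}=(n+1)H$ and relies on the canonical linearization of $K_{\pp^n}$ (Proposition~\ref{KXlinearizable}). You spell out the details the paper leaves implicit: that $\pic(\pp^n)^G=\zz H$, and that a quotient of $\zz/(n+1)\zz$ is isomorphic to a subgroup of it.
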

\begin{proof}
    Note that $-K_X= (n+1)H$, where $H$ is a representative of the class of a general hyperplane on $\pp^n$. 
\end{proof}

The unique smooth Fano threefold in family \textnumero $1.17$ is isomorphic to $\pp^3$. In the next example, we show that each subgroup of $\zz/4\zz$ can be realized as the Amitsur group for a suitable group $G \subset \operatorname{PGL}_4(\mathbb{C})$. 

\begin{example}   \label{amforP3}
 \normalfont We know that $\operatorname{Pic}(\mathbb{P}^3)= \mathbb{Z}[H]$, where $H$ is the divisor corresponding to $\mathcal{O}_{\mathbb{P}^3}(1)$ and $-K_X = 4H$. Let $G \subset \aut(\pp^3)\cong\operatorname{PGL}_4(\cc)$ be a finite subgroup. Note that $\pic(\pp^3)^G=\pic(\pp^3)$ and $\zz[-K_X] \subseteq \pic(X,G)$ by Proposition \ref{KXlinearizable}. We get
\[  
\am(\pp^3,G) \subseteq \zz/4\zz.
\] 
We now show that all nontrivial possibilities for the Amitsur subgroup of $\mathbb{P}^3$ can be realized for suitable choices of group $G$. Consider the group $\mathcal{G}\cong (\zz/4\zz)^2$ generated by automorphisms of $\pp^3$
\begin{align}    \label{dihedralorder8}
\begin{split}
    \sigma : [x_0:x_1:x_2:x_3] &\mapsto [x_0:i x_1:-x_2:-ix_3]\\
    \tau : [x_0:x_1:x_2:x_3] &\mapsto [x_3:x_0:x_1:x_2].   
\end{split}
\end{align}
Then neither $H$ nor $2H$ is $\mathcal{G}$-linearizable. So, $\am(\mathbb{P}^3, \mathcal{G})\cong\mathbb{Z}/4\mathbb{Z}$.
Now consider the group $\mathcal{G}'\cong (\zz/2\zz)^2$ generated by 
\begin{align}   \label{c4c4semidirectc4}
\begin{split}
    \sigma' : [x_0:x_1:x_2:x_3] &\mapsto [x_0:- x_1:x_2:-x_3]\\
    \tau' : [x_0:x_1:x_2:x_3] &\mapsto [x_1:x_0:x_3:x_2]. 
\end{split}
\end{align}
Then $H$ is not $\mathcal{G}'$-linearizable, but $2H$ is. So, $\am(\mathbb{P}^3, \mathcal{G}')\cong\mathbb{Z}/2\mathbb{Z}$.
\end{example}

The following result says that $\am(X,G)$ is an equivariant birational invariant (\cite[Theorem A.1]{BCDP}).
\begin{thm} \label{imprimitivebcdp}
    If $X$ and $Y$ are smooth projective $G$-varieties that are $G$-equivariantly birationally equivalent, then 
    \[
    \am(X,G)=\am(Y,G).
    \]
    In particular, if $X$ is the blow-up of $Y$ in a $G$-invariant subvariety and $E$ is the exceptional divisor, then $E$ is $G$-linearizable. 
\end{thm}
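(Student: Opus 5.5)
The plan is to prove Theorem~\ref{imprimitivebcdp} in two parts. First I prove the statement about the exceptional divisor, because it is a local computation. Then I reduce the general birational statement to that case using equivariant weak factorization.

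\textbf{Step 1: exceptional divisors.} Let $\pi\colon X=\mathrm{Bl}_Z Y\to Y$, where $Z$ is a smooth $G$-invariant subvariety of codimension $c\ge 2$ and $E=\pi^{-1}(Z)$. The ideal sheaf $\mathcal{O}_X(-E)=\pi^{-1}\mathcal{I}_Z\cdot\mathcal{O}_X$ is a $G$-stable subsheaf of $\mathcal{O}_X$. The trivial bundle $\mathcal{O}_X$ carries the tautological linearization coming from $g^*$ on functions, and this restricts to a linearization of the invariant subsheaf $\mathcal{O}_X(-E)$. Hence $-E$, and therefore $E$, lies in the image of $\pic(X,G)$, so $\overline{E}=0$ in $\am(X,G)$.

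\textbf{Step 2: comparison for a single blow-up.} We have $\pic(X)=\pi^*\pic(Y)\oplus\zz E$, and $G$ preserves both summands. So $\pic(X)^G=\pi^*\pic(Y)^G\oplus\zz E$. Pullback $\pi^*$ sends linearized bundles to linearized bundles, so the map $\partial$ commutes with $\pi^*$ on $\pic(Y)^G$, and by Step 1 it kills $E$. It follows that $\am(X,G)=\partial(\pi^*\pic(Y)^G)\supseteq\am(Y,G)$.

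For the reverse inclusion I show that $\pi^*$ is injective on the level of obstructions. Suppose $\pi^*L$ is $G$-linearizable. Since $\pi_*\mathcal{O}_X=\mathcal{O}_Y$, the projection formula gives $\pi_*\pi^*L\cong L$ equivariantly, which transfers the linearization to $L$. So $\partial_Y(L)=\partial_X(\pi^*L)$, and the map $\am(Y,G)\to\am(X,G)$ is an isomorphism. More simply, $\partial$ depends only on the class in $H^2(G,\cc^\times)$, and pullback preserves that class, because the cocycle is built from the automorphisms $\phi_g$ and these correspond bijectively under $\pi^*$ as $H^0(X,\mathcal{O}_X^\times)=\cc^\times=H^0(Y,\mathcal{O}_Y^\times)$.

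\textbf{Step 3: general birational equivalence.} By equivariant weak factorization (Abramovich--Karu--Matsuki--W{\l}odarczyk, which holds $G$-equivariantly), any $G$-birational map between smooth projective $G$-varieties factors as a sequence of blow-ups and blow-downs along smooth $G$-invariant centers, with all intermediate varieties smooth projective $G$-varieties. Step 2 gives equality of Amitsur groups as subgroups of $H^2(G,\cc^\times)$ at each step, and chaining these gives $\am(X,G)=\am(Y,G)$. The main obstacle is this step, specifically making sure the equality holds inside $H^2(G,\cc^\times)$ and not just up to abstract isomorphism. This is why I phrase Step 2 in terms of cocycle classes, so that the identifications compose. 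A subtlety is that a $G$-invariant center need not be irreducible. The blow-up formula then has several exceptional components permuted by $G$, and Step 1 applies to their sum, which is the exceptional divisor, while the $G$-invariant part of their span is generated by the orbit sums, each of which is again an invariant ideal sheaf and hence linearizable.
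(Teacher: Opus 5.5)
Your proof is correct. It differs from the paper in that the paper gives no argument here: it quotes \cite[Theorem A.1]{BCDP} and states the clause about $E$ as an ``in particular''.

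That clause is not a formal consequence of $\am(X,G)=\am(Y,G)$, since the equality only gives $\partial(E)\in\partial(\pi^*\pic(Y)^G)$. So proving it first and directly is the right order, and your Step~1 does this via the $G$-stable ideal sheaf $\mathcal{O}_X(-E)\subset\mathcal{O}_X$. That argument does not even need the center to be smooth.

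In Step~2, the justification ``pullback preserves linearizability'' only yields a surjection $\am(Y,G)\to\am(X,G)$, not the inclusion you write. However, the identity $\partial_X\circ\pi^*=\partial_Y$ that you note at the end gives $\am(X,G)=\partial_X(\pi^*\pic(Y)^G)=\am(Y,G)$ inside $H^2(G,\cc^\times)$ directly. The projection-formula detour is therefore unnecessary.

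The price of your route is the equivariant weak factorization theorem, which is more than you need. Step~2 works verbatim for any $G$-equivariant birational morphism $\pi\colon Z\to X$ of smooth projective varieties. Indeed $\pic(Z)=\pi^*\pic(X)\oplus\bigoplus_i\zz E_i$, where the $E_i$ are the $\pi$-exceptional prime divisors, and $G$ permutes them. So a $G$-equivariant resolution of the closure of the graph of the birational map already suffices.

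A lighter route still uses the sequences $1\to\cc^\times\to\cc(X)^\times\to\operatorname{PDiv}(X)\to 0$ and $0\to\operatorname{PDiv}(X)\to\operatorname{Div}(X)\to\pic(X)\to 0$. Together with $H^1(G,\operatorname{Div}(X))=0$ for the permutation module $\operatorname{Div}(X)$, they identify $\am(X,G)$ with $\ker\bigl(H^2(G,\cc^\times)\to H^2(G,\cc(X)^\times)\bigr)$. This visibly depends only on the $G$-field $\cc(X)$.

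The geometric routes, yours included, have one advantage: they show concretely that exceptional divisors contribute nothing. That is the form in which the paper actually uses the theorem for the blow-ups in families \textnumero 2.8 and \textnumero 3.4.
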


The following observation is immediate, see also \cite[Lemma 2.1]{kreschtschinkel-unramified}.
\begin{prop}  \label{g-eq-divisors}
    Let $f:X \to Y$ be a $G$-equivariant morphism such that $f^*: \pic(Y) \to \pic(X)$ is an isomorphism. Then $\am(X,G) = \am(Y,G)$.  
\end{prop}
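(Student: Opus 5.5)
The idea is to show that the two Picard groups are identified $G$-equivariantly by $f^*$, and that this identification respects linearizations in both directions. Then the forgetful maps $\pic(Y,G)\to\pic(Y)^G$ and $\pic(X,G)\to\pic(X)^G$ have the same image, so their cokernels agree.

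First, since $f$ is $G$-equivariant, $g^*f^*\mathcal{L}\cong f^*g^*\mathcal{L}$ for every line bundle $\mathcal{L}$ on $Y$. Hence $f^*$ carries $G$-invariant classes to $G$-invariant classes. Because $f^*$ is an isomorphism, the same identity applied to $f^{*-1}$ shows the converse, so $f^*$ restricts to an isomorphism $\pic(Y)^G\to\pic(X)^G$.

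Second, pulling back a linearization along the equivariant map $f$ gives a linearization. Concretely, the isomorphisms $f^*\phi_g$ satisfy the cocycle condition because $\phi_g$ does. This gives a homomorphism $\pic(Y,G)\to\pic(X,G)$ compatible with the forgetful maps, and hence a well-defined map
\[
\am(Y,G)\to\am(X,G).
\]

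Third, the cleanest way to see that this map is an isomorphism is through the obstruction class $\partial$. For a $G$-invariant $\mathcal{L}$ on $Y$, the class $\partial(\mathcal{L})\in H^2(G,\cc^\times)$ is computed from the group of pairs $(g,\phi_g)$. This group is a central extension of $G$ by $\aut(\mathcal{L})=\cc^\times$, using that $Y$ is proper and connected.

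Pullback sends such a pair to $(g,f^*\phi_g)$ and maps the central $\cc^\times$ identically onto $\aut(f^*\mathcal{L})=\cc^\times$, since $X$ is also proper and connected. It therefore gives a morphism of central extensions that is the identity on both the kernel and the quotient. Such a morphism is automatically an isomorphism, so $\partial(f^*\mathcal{L})=\partial(\mathcal{L})$.

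Combining this with the bijection $\pic(Y)^G\cong\pic(X)^G$ from the first step, the images of $\partial$ coincide. This gives $\am(X,G)=\am(Y,G)$ as subgroups of $H^2(G,\cc^\times)$.

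The main point needing care is the identification of the central $\cc^\times$ on the two sides. It requires $H^0(X,\mathcal{O}_X^\times)=H^0(Y,\mathcal{O}_Y^\times)=\cc^\times$, which holds for proper connected varieties over $\cc$ and is the standing setting of the paper. With this in hand, surjectivity of $\pic(Y,G)\to$ the linearizable part of $\pic(X)^G$ needs no separate descent argument: a class on $X$ has trivial obstruction exactly when the corresponding class on $Y$ does.
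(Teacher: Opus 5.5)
Your argument is correct and follows the same route as the paper: $f^*$ identifies $\pic(Y)^G$ with $\pic(X)^G$, and pullback is compatible with the exact sequences that define the Amitsur groups. Your central-extension check that $\partial_X(f^*\mathcal{L})=\partial_Y(\mathcal{L})$ in $H^2(G,\cc^\times)$ spells out what the paper's ``commutativity of the diagram'' leaves implicit, and it is exactly what gives injectivity of $\am(Y,G)\to\am(X,G)$; a bare diagram chase knowing only that the map on $\pic^G$ is an isomorphism gives surjectivity but not injectivity.
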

\begin{proof}
It follows that $f^* : \operatorname{Pic}(X)^G \xrightarrow[]{\sim} \operatorname{Pic}(Y)^G$. The rest follows from the commutativity of the following diagram. 
\[\begin{tikzcd}
	1 & {\operatorname{Ker}\epsilon_X} & {\operatorname{Pic}(X,G)} & {\operatorname{Pic}(X)^G} & {\operatorname{Am}(X,G)} & 1 \\
	1 & {\operatorname{Ker}\epsilon_Y} & {\operatorname{Pic}(Y,G)} & {\operatorname{Pic}(Y)^G} & {\operatorname{Am}(Y,G)} & 1
	\arrow[from=1-1, to=1-2]
	\arrow[from=1-2, to=1-3]
	\arrow["{\epsilon_X}", from=1-3, to=1-4]
	\arrow["{\partial_X}", from=1-4, to=1-5]
	\arrow[from=1-5, to=1-6]
	\arrow[from=2-1, to=2-2]
	\arrow[from=2-2, to=1-2]
	\arrow[from=2-2, to=2-3]
	\arrow["{f^*}", from=2-3, to=1-3]
	\arrow["{\epsilon_Y}", from=2-3, to=2-4]
	\arrow["{f^*}", from=2-4, to=1-4]
	\arrow["{\partial_Y}", from=2-4, to=2-5]
	\arrow[from=2-5, to=1-5]
	\arrow[from=2-5, to=2-6]
\end{tikzcd}\]
\end{proof}

For a $\mathcal{L}\in \pic(X)^G$, its Amitsur period is the order of $\partial(\mathcal{L})$ in $H^2(G,\cc^\times)$ \cite[\S 2]{duncan2025numericalamitsurgroup}. We denote it by $m$ in the following proposition.

\begin{prop} \label{Gsubrepn_result}
Let $\mathcal{L}$ be a $G$-invariant line bundle on $X$.
   If there exists a $G_{\mathcal{L}}$-subrepresentation of $H^0(X,\mathcal{L})$ of dimension $d$, then $d\partial(\mathcal{L})=0$. In particular, $m$ divides $d$.
\end{prop}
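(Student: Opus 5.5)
The plan is to pass to a projective bundle and compare two tautological line bundles, one trivially linearizable and one carrying exactly the obstruction $\partial(\mathcal{L})$. Write $\mathcal{G}$ for the central extension of $G$ by $\cc^\times$ that acts on the total space of $\mathcal{L}$. Concretely, $\mathcal{G}$ consists of pairs $(g,\phi)$ with $g\in G$ and $\phi: g^*\mathcal{L}\to\mathcal{L}$ an isomorphism. Since $X$ is proper, $\aut(\mathcal{L})=\cc^\times$, so this really is an extension
\[
1\to\cc^\times\to\mathcal{G}\to G\to 1 .
\]
Its class in $H^2(G,\cc^\times)$ is $\partial(\mathcal{L})$. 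The group $\mathcal{G}$ acts linearly on $H^0(X,\mathcal{L})$, and the central $\cc^\times$ acts by scalars of weight one. (I read $G_{\mathcal{L}}$ as this group $\mathcal{G}$; a subrepresentation of $H^0(X,\mathcal{L})$ only makes sense for it.)

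Let $V\subseteq H^0(X,\mathcal{L})$ be a $\mathcal{G}$-stable subspace of dimension $d$. I take the determinant of this representation,
\[
\chi=\det(\mathcal{G}\to \mathrm{GL}(V)):\ \mathcal{G}\to\cc^\times .
\]
A central scalar $t$ acts on $V$ as $t\cdot\mathrm{id}$, so its determinant is $t^d$. Hence $\chi|_{\cc^\times}$ is the character $t\mapsto t^d$.

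Next I use the general fact about central extensions. The pushout of $\mathcal{G}$ along $t\mapsto t^d$ represents $d\,\partial(\mathcal{L})$. That pushout splits exactly when the character $t\mapsto t^d$ of $\cc^\times$ extends to a character of $\mathcal{G}$. The character $\chi$ provides such an extension, so the pushout splits and $d\,\partial(\mathcal{L})=0$.

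For a more geometric phrasing, use the pushout-splitting characterization of Amitsur classes from \cite[\S A]{BCDP}. The line bundle $\mathcal{L}^{\otimes d}$ has the $\mathcal{G}$-action, twisted by $\chi^{-1}$, on which the center acts trivially. This action descends to a $G$-linearization of $\mathcal{L}^{d}$, so $\partial(\mathcal{L}^d)=d\,\partial(\mathcal{L})=0$.

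Finally, since $m$ is the order of $\partial(\mathcal{L})$ and $d\,\partial(\mathcal{L})=0$, $m$ divides $d$. The only delicate point is the identification of the class of $\mathcal{G}$ with $\partial(\mathcal{L})$, including compatible sign conventions. I would take this directly from the construction in \cite[\S A]{BCDP}.
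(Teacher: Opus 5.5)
Your argument is correct. The determinant of $V$ is a character of $G_{\mathcal{L}}$ whose restriction to the central $\cc^\times$ is $t\mapsto t^{\pm d}$; the sign depends only on left/right action conventions. So the pushout of the extension along the $d$-th power map splits, giving $d\,\partial(\mathcal{L})=0$. The paper gives no argument of its own here and only cites \cite[Proposition 2.6]{duncan2025numericalamitsurgroup}, so I cannot compare routes, but your determinant argument is a complete proof of the statement.
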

\begin{proof}
   For a proof, see \cite[Proposition 2.6]{duncan2025numericalamitsurgroup}.  
\end{proof}

\subsection{The group \texorpdfstring{$\autp(X)$}{autp}} 
The natural action of the automorphism group of a Fano variety on its Picard group can provide insight into the structure of the automorphism group itself. For a smooth projective variety $X$, consider the homomorphism 
\[
\aut(X) \rightarrow \aut(\pic(X)).
\]
Let $G\subseteq \aut(X)$ be a finite group. We write $\autp(X,G)$ for the image of $G$, and the image of the entire automorphism group $\aut(X)$ will be denoted by $\autp(X)$. 

The classification of the group $\autp(X)$ for smooth Fano threefolds was studied by the author in \cite{sharma2025actionspicardgroupsmooth}. In particular, the classification is complete for both smooth primitive Fano threefolds and smooth toric Fano threefolds. Throughout this article, we make use of this classification, which is summarized in Tables~\ref{table:primitive} and~\ref{table:toric}.


\section{Toric Fano threefolds}
There are $18$ deformation families of smooth toric Fano threefolds
\begin{center}
   \textnumero 1.17,\; \textnumero 2.33,\;  \textnumero 2.34,\;  \textnumero 2.35,\;  \textnumero  2.36,\;  \textnumero 3.25,\;  \textnumero 3.26,\; \textnumero 3.27,\;   \textnumero 3.28,\;  \textnumero 3.29,\;  \textnumero 3.30,\; \textnumero  3.31,\;
   \textnumero 4.9,\; \textnumero 4.10,\;
   \textnumero 4.11,\; \textnumero 4.12,\; \textnumero 5.2,\; \textnumero 5.3.
\end{center}
Let $X$ denote a member. In \cite[\S 6]{duncan2025numericalamitsurgroup}, the Amitsur group of toric varieties was discussed; we begin by recalling the main results from that section. 

Suppose $T$ is the torus of a smooth projective toric variety $X$ and $\Sigma$ is the fan. The group of torus-invariant divisor classes of $X$ is $\operatorname{TDiv}(X)\cong \zz^{\Sigma(1)}$.

\begin{prop}  \label{amfor_smoothtoric}
    For every subgroup $J$ of $\autp(X)$ and its preimage $W$ in $\aut(\Sigma)$, there exists a finite subgroup $G$ of $\aut(X)$ such that $\autp(X,G)=J$, and
    \begin{equation}  \label{Am-for-toric-iso}
         \operatorname{coker}(\operatorname{TDiv}(X)^W \to \pic(X)^J) \xrightarrow{\sim} \am(X,G)
    \end{equation}
    is an isomorphism.
\end{prop}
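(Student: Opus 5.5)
Since the paper presents this statement as a recollection of \cite[\S 6]{duncan2025numericalamitsurgroup}, the plan is to reconstruct their argument in three stages: build an explicit group $G$, show that torus-invariant divisors supply enough linearizations, and then show that nothing else is linearizable.

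\textbf{Construction of $G$.} Recall that $\aut(X)$ contains the torus $T$ and the finite group $\aut(\Sigma)$ of lattice automorphisms preserving the fan. Together these form $T\rtimes\aut(\Sigma)$, and its image in $\aut(\pic(X))$ is all of $\autp(X)$. This holds because the connected component $\aut^0(X)$ acts trivially on $\pic(X)$, and every component of $\aut(X)$ meets the normalizer of $T$. Given $J$, let $W$ be its preimage in $\aut(\Sigma)$. Choose an integer $n$ that is large and divisible, for instance a multiple of $|W|$ and of the exponents that will appear, and set
\[
G = T[n]\rtimes W ,
\]
where $T[n]$ is the $n$-torsion of $T$. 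This group is finite and $W$-stable, and $\autp(X,G)=J$ by construction.

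\textbf{Surjectivity onto the target: $W$-invariant torus divisors are linearizable.} A $W$-invariant element of $\operatorname{TDiv}(X)=\zz^{\Sigma(1)}$ is a $W$-invariant combination $D$ of the boundary divisors. Such a $D$ is $G$-stable as an actual divisor, not only as a class. So $\mathcal{O}(D)$ carries the canonical linearization by its own sections, namely the rational functions $f$ with $\operatorname{div}(f)+D\ge 0$. Hence the image of $\operatorname{TDiv}(X)^W\to\pic(X)^J$ lies in $\pic(X,G)$, and we obtain a surjection
\[
\operatorname{coker}(\operatorname{TDiv}(X)^W\to\pic(X)^J)\to\am(X,G).
\]
We also need $\pic(X)^G=\pic(X)^J$, which holds because $T$ acts trivially on $\pic(X)$.

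\textbf{Injectivity: a linearizable class comes from $\operatorname{TDiv}(X)^W$.} Let $L\in\pic(X)^J$ be $G$-linearizable. Restrict the linearization to $T[n]$. The space $H^0$, or better the generic fiber over the open orbit, decomposes into characters of $T[n]$, that is, characters of $M/nM$. On a trivialization over the open torus, a $T[n]$-linearized bundle corresponds to a divisor $D\in\operatorname{TDiv}(X)$ well defined modulo $n\,\operatorname{div}(\chi^m)$. More precisely, write $0\to M\to\zz^{\Sigma(1)}\to\pic(X)\to 0$ and tensor with $\zz/n$. Then $T[n]$-linearizations of $L$ correspond to lifts of $L$ to $\zz^{\Sigma(1)}/nM$. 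The additional $W$-linearization makes this lift $W$-invariant in $\zz^{\Sigma(1)}/nM$, up to twisting by characters. So it suffices to show that $W$-invariant classes in $\zz^{\Sigma(1)}/nM$ lift to $W$-invariant elements of $\zz^{\Sigma(1)}$, modulo $M$ corrections. This is a group cohomology computation:
\[
H^1(W, nM)\to H^1(W,M)
\]
is multiplication by $n$, which kills $H^1(W,M)$ once $|W|$ divides $n$. The obstruction therefore vanishes, and $L$ lies in the image of $\operatorname{TDiv}(X)^W$.

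\textbf{Main obstacle.} I expect the hardest step to be the bookkeeping in the injectivity argument. One must correctly translate $G$-linearizations of $L$ into $W$-invariant lifts in $\zz^{\Sigma(1)}/nM$, tracking the ambiguity by $\operatorname{Hom}(G,\cc^\times)$, and then choose $n$ so that the connecting maps in the long exact $W$-cohomology sequence of
\[
0\to M\to\zz^{\Sigma(1)}\to\pic(X)\to 0
\]
behave as needed. I would first test the argument on $\pp^3$, where it should reproduce Example~\ref{amforP3}.
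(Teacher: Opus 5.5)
Your proof is correct. The paper gives no argument of its own here; it only cites Lemma 6.3, Theorem 6.4 and Proposition 6.5 of \cite{duncan2025numericalamitsurgroup}. So what you have is a self-contained proof rather than a variant of one in the text, and each of its steps holds up.
\begin{itemize}
\item The map $\aut(\Sigma)\to\autp(X)$ is onto, because $T$ is a maximal torus of $\aut^0(X)$ with $N_{\aut(X)}(T)=T\rtimes\aut(\Sigma)$. Hence $\autp(X,T[n]\rtimes W)=J$.
\item $W$-invariant torus-invariant divisors are $G$-stable as divisors, and hence linearizable.
\item Restriction $\pic(X,T)=\zz^{\Sigma(1)}\to\pic(X,T[n])$ is surjective, since characters of $T[n]$ extend to $T$, and its kernel is $nM$. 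This gives a $W$-equivariant isomorphism $\pic(X,T[n])\cong\zz^{\Sigma(1)}/nM$. In it, the restriction of a $G$-linearization is an honest $W$-fixed point; your qualifier ``up to twisting by characters'' is unnecessary.
\item Compare the connecting maps of $0\to nM\to\zz^{\Sigma(1)}\to\zz^{\Sigma(1)}/nM\to 0$ and $0\to M\to\zz^{\Sigma(1)}\to\pic(X)\to 0$. This gives $\delta(L)=\iota_*\delta'(\tilde L)$, where $\iota_*\colon H^1(W,nM)\to H^1(W,M)$ is zero once $|W|$ divides $n$. So $n=|W|$ already suffices, and no further ``exponents'' enter.
\end{itemize}

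Two phrasings should be fixed. First, the sequence you want is the pushout $0\to M/nM\to\zz^{\Sigma(1)}/nM\to\pic(X)\to 0$, not the tensor product with $\zz/n$. Second, the fiber over a point of the open orbit is not a $T[n]$-representation, since $T[n]$ moves the point; drop that heuristic sentence.

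Your route does give something beyond the bare citation. You get an explicit group $G=T[|W|]\rtimes W$. You also get a closed formula: because $\zz^{\Sigma(1)}$ is a permutation module, $H^1(W,\zz^{\Sigma(1)})=0$, so $\operatorname{coker}(\operatorname{TDiv}(X)^W\to\pic(X)^J)\cong H^1(W,M)$. This turns the entries of Table~\ref{table:toric} into a group-cohomology computation.

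Like the statement itself, your argument only shows that the cokernel is attained. The ``largest possible'' claims in the tables also need an upper bound valid for every finite $G$ with $\autp(X,G)=J$. Neither this proposition nor your proof supplies that bound.
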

\begin{proof}
    For a proof, see Lemma 6.3, Theorem 6.4, and Proposition 6.5 in \cite{duncan2025numericalamitsurgroup}.
\end{proof}
The groups $J$ are completely determined by the fan; combined with the above proposition, this yields a complete classification of the Amitsur groups of $X$. We illustrate this with an example of an imprimitive Fano threefold $X$ in family \textnumero 5.2. 

\begin{example}
\normalfont Let $X$ be a Fano threefold in family \textnumero 5.2. Suppose that the Cox ring of $X$ has coordinates  with weights 
\begin{center}
\begin{tabular}{c c c c c c c c}
   $x_0$  & $x_1$ & $x_2$ & $x_3$ & $x_4$ & $x_5$ & $x_6$ & $x_7$ \\
   \hline
   1 & -1 & 1 &  0 & 0 & 0 &  0 &  0      \\
   0 & 0 & 0 &  1 &  -1 & 1 & 0 & 0          \\
   0 & 0 & 1 &  -1 & 1 & 0 &  0 &  0         \\
   -1 & 0 & 0 &  0 & 0 & 0 & 1 &  1        \\
   0 & 1 & -1 & 1 & 0 & 0 & 0 & 0
\end{tabular}
\end{center}
Then $\pic(X)= \zz[D_{x_0}, D_{x_1}, D_{x_4}, D_{x_5}, D_{x_6}]$. Recall from Table \ref{table:toric} that $\autp(X)=\zz/2\zz$. So, we have two possibilities: $J$ is isomorphic to $\zz/2\zz$ or it is trivial. The automorphism group of the fan $\Sigma$ is 
\[
\aut(\Sigma) = \langle (1\;5)(2\;4)(6\;7), (6\;7)\rangle \cong (\zz/2\zz)^2, 
\]
where the $2$-cycle $(i\;j)$ swaps the divisors classes $D_{x_i}$ and $D_{x_j}$.

First suppose that $J=0$. Then $\pic(X)^J=\pic(X)$ and $W=\{(1), (6\;7)  \}$
\[
\operatorname{TDiv}(X)^W = \zz[D_{x_0}, D_{x_1}, D_{x_2}, D_{x_3}, D_{x_4}, D_{x_5}, D_{x_6}+D_{x_7}].
\]
We find that $\im(\operatorname{TDiv}(X)^W \to \pic(X))\cong \pic(X)$, thus $\am(X,G)=0$ for some finite $G\subseteq \aut(X)$. 

If $J\cong \zz/2\zz$, then $W= \{(1\;5)(2\;4)(6\;7), (1\;5)(2\;4)\}$, and 
\begin{align*}
    \pic(X)^J &= \zz[D_{x_0}, D_{x_1}+D_{x_5}, D_{x_4}+D_{x_5}, D_{x_6}] \cong \zz^4 \\
    \operatorname{TDiv}(X)^W &= \zz[D_{x_0}, D_{x_1}+D_{x_5}, D_{x_2}+D_{x_4}, D_{x_3}, D_{x_6}+D_{x_7}].
\end{align*}
We find that the image of the linear map $\operatorname{TDiv}(X)^W \to \pic(X)^J$ is isomorphic to
\[
\zz[D_{x_0}, D_{x_1}+D_{x_5}, 2D_{x_4}+D_{x_5}-D_{x_1}, D_{x_0}+D_{x_1}-D_{x_4}+D_{x_6}, 2D_{x_6}],
\]
and the cokernel is isomorphic to $\zz/2\zz$. Thus, $\am(X,G)\cong \zz/2\zz$ for some finite group $G\subseteq \aut(X)$. 
\end{example}

The Amitsur groups of the remaining families admitting a toric representative are computed in a similar manner. The possible Amitsur groups are listed in Table~\ref{table:toric}.


\section{\texorpdfstring{Prime Fano threefolds }{Rank 1}}    \label{Amrank1}
There are $17$ deformation families of smooth prime Fano threefolds, that is, Fano threefolds with Picard rank $1$
\begin{center}
    \textnumero 1.1,\;  \textnumero 1.2,\dots , \textnumero 1.17.
\end{center}
Let us denote a member of these families by $X$. Recall that the index $r$ is the maximal positive integer such that the anticanonical divisor $-K_X = rH$ for some ample divisor $H$, called the fundamental divisor (\cite{isko1}).

\begin{prop}
    If $X$ is a smooth Fano variety of index $r$ with $\operatorname{rk}\pic(X)^G =1$, then $\am(X,G)\subseteq \zz/r\zz$ for all finite groups $G\subseteq \aut(X)$.
\end{prop}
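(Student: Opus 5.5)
We argue as in Example~\ref{amforP3}, now working inside $\pic(X)^G$ rather than all of $\pic(X)$. The plan has three steps: show that $\pic(X)^G$ is infinite cyclic, find its generator up to sign, and use that $-K_X$ is linearizable.

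First, $\pic(X)$ is torsion free because $X$ is a smooth Fano variety: Kodaira vanishing gives $H^1(X,\mathcal{O}_X)=H^2(X,\mathcal{O}_X)=0$, so $\pic(X)\cong H^2(X,\zz)$. Moreover $X$ is simply connected, so $H_1(X,\zz)=0$ and $H^2(X,\zz)$ has no torsion. Hence $\pic(X)^G$ is a free abelian group, and by hypothesis it has rank one. Let $D$ be a generator, so $\pic(X)^G=\zz[D]$.

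Next we identify $D$. The canonical class is $G$-invariant, since it is preserved by every automorphism, and Proposition~\ref{KXlinearizable} shows it is $G$-linearizable. So $-K_X\in\pic(X)^G$, and we may write $-K_X=kD$ with $k\in\zz$. Replacing $D$ by $-D$ if necessary, we take $k>0$. Because $-K_X$ is ample, $D$ is numerically a positive rational multiple of an ample class, so $D$ is ample. The index $r$ is the largest integer such that $-K_X$ is divisible by $r$ in $\pic(X)$, so $k\le r$. In fact $k$ divides $r$. To see this, write $-K_X=rH$; then $H$ is primitive in $\pic(X)$, since any further divisibility would increase the index, and $\pic(X)$ is torsion free. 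The class $D$ is a positive rational multiple of $H$ in the free group $\pic(X)$, so $D=(r/k)H$ with $r/k$ an integer.

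Finally we bound the Amitsur group. The map $\partial:\pic(X)^G\to\am(X,G)$ is surjective, so $\am(X,G)$ is cyclic and generated by $\overline{D}$. Since $k\overline{D}=\overline{-K_X}=0$, the order of $\overline{D}$ divides $k$. Therefore $\am(X,G)$ is a quotient of $\zz/k\zz$, hence cyclic of order dividing $k$. As $k$ divides $r$, it is isomorphic to a subgroup of $\zz/r\zz$.

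The only point needing care is the identification of the generator of $\pic(X)^G$ with a divisor $D$ satisfying $kD=-K_X$ for some $k$ dividing $r$. This rests on $\pic(X)$ being torsion free. If one prefers not to use that $k$ divides $r$, it still suffices that $\am(X,G)$ is cyclic of order dividing $k$. Divisibility does hold, however, because $H$ is primitive, and that is the argument we will use.
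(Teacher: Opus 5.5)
Your proof is correct and takes the same route as the paper: the linearizability of $-K_X$ (Proposition~\ref{KXlinearizable}) kills a multiple of the generator of the rank-one group $\pic(X)^G$, so $\am(X,G)$ is cyclic of order dividing $r$. You also justify what the paper's one-line proof leaves implicit, namely that $\pic(X)$ is torsion free, so $\pic(X)^G\cong\zz$ and $H\in\pic(X)^G$. In fact your setup forces $k=r$: the identity $rg^*H=rH$ gives $g^*H=H$, so $H\in\zz D$, and together with $D=(r/k)H$ this yields $D=H$.
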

\begin{proof}
   Let $G\subseteq \aut(X)$ be a finite group. We know from Proposition \ref{KXlinearizable} that $-K_X=rH$ is $G$-linearizable. Then $\am(X,G):= \im\partial$ is a subgroup of the cyclic group of order $r$ generated by $\partial(H)$, equivalently, $\am(X,G)\subseteq \mathbb{Z}/r\mathbb{Z}$.
\end{proof}

In particular, $H$ is $G$-linearizable for $r=1$. The classification of Amitsur groups for Picard rank $\rho=1$ and index $1$ follows immediately.

\begin{coro}\label{amfor_index=rank=1}
If $X$ is a smooth Fano threefold in any of the following families 
\begin{center}
    \textnumero 1.1,\; \dots , \textnumero 1.10,
\end{center}
then $\am(X,G)=0$ for all finite subgroups $G\subseteq \aut(X)$.
\end{coro}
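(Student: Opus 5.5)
The plan is to deduce the corollary directly from the preceding proposition, once we know two facts about the families \textnumero 1.1 through \textnumero 1.10. First, these are exactly the smooth Fano threefolds with Picard rank $1$ and Fano index $r=1$. In the Mori--Mukai numbering, \textnumero 1.1--1.10 are the index-one prime Fano threefolds of genus $2,\dots,12$ with $g\neq 11$. The families \textnumero 1.11--1.15 have index $2$, \textnumero 1.16 (the quadric) has index $3$, and \textnumero 1.17 ($\pp^3$) has index $4$. We take this from the classification tables \cite{MM81classtable, isko1}. In particular $\pic(X)=\zz[H]$ with $H=-K_X$.

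Second, we need $\operatorname{rk}\pic(X)^G=1$ for every finite $G\subseteq\aut(X)$. This is automatic: $\pic(X)\cong\zz$, and any automorphism preserves the ample generator $H$. In fact $H=-K_X$ is canonically determined, so $G$ acts trivially on $\pic(X)$ and $\pic(X)^G=\pic(X)$.

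Applying the proposition with $r=1$ gives $\am(X,G)\subseteq\zz/1\zz=0$. One can also argue directly from Proposition \ref{KXlinearizable}. The anticanonical bundle $-K_X=H$ carries a canonical $G$-linearization, hence lies in the image of $\epsilon\colon\pic(X,G)\to\pic(X)^G$. Since $H$ generates $\pic(X)^G$, the map $\epsilon$ is surjective. Its cokernel, which is $\am(X,G)$ by \eqref{Exactsequence:Amitsur}, is therefore trivial.

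There is no real obstacle. The only point to check is the bookkeeping that \textnumero 1.1--1.10 are precisely the index-one families, and this is read off the classification table.
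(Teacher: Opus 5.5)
Your proposal is correct and is the paper's argument: the paper deduces the corollary from the preceding proposition with $r=1$, noting that $H=-K_X$ is then $G$-linearizable by Proposition \ref{KXlinearizable}. Your checks that \textnumero 1.1--1.10 are exactly the index-one families and that $\pic(X)^G=\pic(X)$ make explicit what the paper leaves implicit.
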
 

We now focus on the remaining families
\begin{center}
    \textnumero 1.11,\; \dots , \textnumero 1.17.
\end{center}
Recall that $r=3$ if $X$ is in \textnumero 1.16 and $r=4$ if it is in \textnumero 1.17, otherwise $r=2$. The linear system $|H|$ is very ample for the families \textnumero $1.13$, \textnumero $1.14$, and \textnumero $1.15$, and there is an embedding  
\[
\phi_{|H|} : X = X_d \hookrightarrow \mathbb{P}^{d+1}, 
\]
where $d=H^3$.
For the family \textnumero $1.12$, $d=2$ and $\phi_{|H|}: X \to \mathbb{P}^{3}$ is the double covering. If $X$ is in \textnumero $1.11$, $\phi_{|H|}$ is only a rational map to $\mathbb{P}^2$ (\cite[Theorem 4.2]{isko1}). 

\vspace{0.2cm}
Suppose that $X$ is a smooth Fano threefold in  family \textnumero $1.11$, then it can be described as a hypersurface of degree $6$ in $\mathbb{P}(1,1,1,2,3)$ (\cite{isko2,MM81classtable}). Let $x_0,x_1,x_2,x_3,x_4$ be the coordinates on the weighted projective space $\mathbb{P}(1,1,1,2,3)$ having weights $1,1,1,2,$ and $3$, respectively. Recall that $-K_X=2H$ where $H$ is the fundamental divisor.

The next lemma shows that $H$ is $G$-linearizable for any finite group $G\subseteq \aut(X)$.

\begin{lemma}    \label{Am:1.11}
    Let $X$ be a Fano threefold in family \textnumero 1.11. Let $G\subseteq \aut(X)$ be a finite group. Then $\am(X,G)=0$.
\end{lemma}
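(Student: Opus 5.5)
By the preceding proposition, $\am(X,G)$ is a subgroup of $\zz/2\zz$ generated by $\partial(H)$. It therefore suffices to show that $\partial(H)=0$. We will do this with Proposition \ref{Gsubrepn_result}: since $\pic(X)=\zz[H]$, every element of $G$ fixes the class of $H$, so $G_H=G$. If $H^0(X,H)$ contains a $G$-subrepresentation of odd dimension $d$, then the Amitsur period $m$ of $H$ divides both $2$ (because $2H=-K_X$ is linearizable by Proposition \ref{KXlinearizable}) and $d$. Hence $m=1$.

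The plan is to compute $H^0(X,H)$ and show that it is three-dimensional. Write $X\subset\pp(1,1,1,2,3)$ as a sextic $\{x_4^2=x_3^3+x_3^2f_2+x_3f_4+f_6\}$ (after completing the square in $x_4$, as usual). Then $H$ is the restriction of $\mathcal{O}(1)$, and $H^0(X,H)$ is spanned by $x_0,x_1,x_2$. This space has dimension $3$, which agrees with $h^0(H)=3$ for Fano threefolds of index $2$ and degree $1$ (\cite{isko1}). The rational map $\phi_{|H|}\colon X\dashrightarrow\pp^2$ recalled above is exactly the projection to $[x_0:x_1:x_2]$, with base locus the single point $[0:0:0:1:1]$.

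Since $H$ is $G$-invariant, we may choose isomorphisms $\phi_g\colon g^*H\to H$. These give a projective representation of $G$ on $H^0(X,H)$, which lifts to a linear representation of a central extension $\tilde G$. In the framework of Proposition \ref{Gsubrepn_result}, this means that $H^0(X,H)$ is a $G$-invariant subspace of dimension $d=3$. Applying the proposition with $d=3$ gives $3\,\partial(H)=0$. Combining this with $2\,\partial(H)=0$ yields $\partial(H)=0$, so $\am(X,G)=0$.

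The only point that needs care is the dimension count, namely that $h^0(X,H)=3$ is odd. This can be checked either from the weighted hypersurface description (the degree-one part of the graded ring is spanned by $x_0,x_1,x_2$, since the relation has degree $6$), or by Riemann--Roch: $h^0(H)=\tfrac{r}{2}\cdot\ldots$, or more concretely $h^0(X,H)=g+\dots$. Using the known formula $h^0(X,H)=d+2=3$ for index-two Fano threefolds with $d=H^3=1$ is the cleanest route. Beyond that, no group-theoretic casework is needed.
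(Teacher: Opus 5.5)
Your argument is correct, but it is not the route the paper takes.

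The paper passes to $|3H|$. It uses that automorphisms of $X$ extend to $\pp(1,1,1,2,3)$, and conjugates $G$ into the Levi subgroup $M$, where $x_4\mapsto cx_4$. This produces a one-dimensional invariant piece: the line spanned by $x_4$, i.e.\ a $G$-fixed point of $\pp(H^0(X,3H)^{\vee})$. Hence $\partial(3H)=0$, and combined with $\partial(2H)=\partial(-K_X)=0$ this gives $\partial(H)=0$.

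You instead feed the whole space $H^0(X,H)$ into Proposition~\ref{Gsubrepn_result}. The only input is then $h^0(X,H)=3$, which gives $3\partial(H)=0$ directly. This needs no information about $\aut(X)$ beyond the $G$-invariance of $H$. It avoids both the description of automorphisms through the ambient weighted projective space and the Levi conjugation. It is exactly the parity argument the paper itself uses for Lemma~\ref{Am:1.15}, with $h^0(H)=7$. The paper's route only buys an explicit invariant line, which is not needed.

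Two cosmetic fixes:
\begin{enumerate}
\item In Proposition~\ref{Gsubrepn_result}, $G_{\mathcal{L}}$ is the lifting group, i.e.\ the central extension of $G$ by $\cc^\times$ acting linearly on $H^0(X,\mathcal{L})$. It is not the stabilizer of the class. Replace the remark ``$G_H=G$'' by the central-extension description you already give in your third paragraph.
\item Delete the unfinished Riemann--Roch fragments at the end. The degree-one count in the graded ring already settles $h^0(X,H)=3$, as does $h^0(H)=H^3+2$ for del Pezzo threefolds.
\end{enumerate}
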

\begin{proof}
Consider the map induced by $|3H|$, $\mathbb{\phi}_{|3H|}: X \rightarrow \mathbb{P}^{13}=\pp(H^0(X, 3H)^{\vee})$, which is obtained by restriction of the map $\pp \to \pp^{13}$ 
\begin{align*}
             [x_0:x_1:x_2:x_3:x_4] \to [x_0^3: \dots : x_2x_3 : x_4]
\end{align*}
to $X$, where $\pp:=\pp(1,1,1,2,3)$.
An automorphism $\sigma$ of $ \pp$ is
\begin{multline*}
   [x_0:x_1:x_2:x_3:x_4] \mapsto [a_{00}\; x_0  + a_{01}\; x_1 + a_{02}\; x_2:\; a_{10}\; x_0  + a_{11}\; x_1 + a_{12}\; x_2 \;: \\a_{20}\; x_0  + a_{21}\; x_1 + a_{22}\; x_2 : b x_3 + A_2(x_0,x_1,x_2) : c x_4 + B_3(x_0,x_1,x_2,x_3)],
\end{multline*}
where $b,c,a_{ij} \in \mathbb{C}$ for $0 \leq i,j \leq 2$, $A_2$ is a homogeneous polynomial of degree $2$ in $x_0,x_1,x_2$ and $B_3$ is a homogeneous polynomial of degree $3$ in $x_0,x_1,x_2, x_3$. 
We have the decomposition $\aut(\pp)\cong\aut(\pp)^\circ= R_u \rtimes M$, where $R_u$ is the unipotent radical and $M:=\{ \sigma \;|\; A_2=B_3=0\}$ is a maximal reductive (Levi) subgroup of $\aut(\pp)$. If $G\subseteq \aut(\pp)$ is a finite group, then it is conjugate to a subgroup of $M$. 
Assume that $c=1$, without loss of generality.

Consider the action on $\pp^{13}$ induced by $G$ via $\mathbb{\phi}_{|3H|}$, it fixes the point
\begin{align*}
p:= \phi_{|3H|}([0:0:0:0:1])= [0:\dots :0:1] \in \pp^{13},
\end{align*}
and consequently, there is a $G_{3H}$-subrepresentation of $H^0(X, 3H)^{\vee}$ of degree $1$. 
It now follows from Proposition \ref{Gsubrepn_result} that $\partial(3H)=0$.  Since $-K_X=2H$, $\partial(2H)=0$. Thus, $H$ is $G$-linearizable, and
$\am(X,G)=0$ for any finite subgroup $G\subset \aut(X)$. 
\end{proof}

A smooth Fano threefold $X$
in family \textnumero 1.12 is a double cover of $\pp^3$ branched along a smooth quartic surface $B$ \cite{isko2,MM81classtable}. Let $\phi:=\phi_{|H|}: X \to \pp^3$ denote the double covering, where the divisor class $[H]$ is also the pullback to $X$ of the class of a general hyperplane $L$ on $\pp^3$. 
The index is $r= 2$; therefore, we have 
\[\am(X,G)\subseteq \zz/2\zz.
\]

In the following lemma, we show that $\am(X,G)\cong \zz/2\zz$ for the choice of a finite group $G$ and a smooth quartic $B$ defining the branch locus of $X$.

\begin{lemma}   \label{Am:1.12}
Let $X$ be a Fano threefold in family \textnumero 1.12. Let $G\subseteq \aut(X)$ be a finite group. Then $\am(X,G)$ is isomorphic to $\zz/2\zz$ or trivial.
\end{lemma}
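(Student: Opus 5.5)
The bound is immediate from the index. Since $X$ has Picard rank $1$, $\pic(X)^G=\pic(X)=\zz[H]$, and the proposition on Fano varieties of index $r$ applies with $r=2$. This gives $\am(X,G)\subseteq \zz/2\zz$ for every finite $G\subseteq\aut(X)$, so $\am(X,G)$ is trivial or $\zz/2\zz$.

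The substance, as in Example \ref{amforP3}, is to show that the value $\zz/2\zz$ actually occurs; otherwise the statement would be vacuous. We use the description of $X$ as the hypersurface
\[
w^2=f_4(x_0,\dots,x_3)\subset \pp(1,1,1,1,2).
\]
Here $H^0(X,H)=\langle x_0,\dots,x_3\rangle$, and $\aut(X)$ is a central extension by the covering involution $\iota\colon w\mapsto -w$ of the subgroup of $\operatorname{PGL}_4(\cc)$ preserving $B=\{f_4=0\}$.

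We take $G$ to be a lift of the group $\mathcal{G}'\cong(\zz/2\zz)^2$ of Example \ref{amforP3}, which is generated by $\sigma'$ and $\tau'$. The quartic must be $\mathcal{G}'$-invariant and smooth. We choose, for instance,
\[
f_4=x_0^4+x_1^4+x_2^4+x_3^4+\lambda(x_0^2x_1^2+x_2^2x_3^2)
\]
with $\lambda$ general, and check smoothness by a Jacobian computation at a specific $\lambda$; a Fermat-type quartic suffices. We lift $\sigma'$ and $\tau'$ to automorphisms of $X$ by keeping the given matrices on the $x_i$ and acting on $w$ by a sign, chosen so that $f_4$ is preserved up to the correct square.

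Suppose $H$ were $G$-linearizable. Then $G$ would act linearly on $H^0(X,H)\cong\cc^4$, compatibly with its projective action on $\pp^3=\pp(H^0(X,H)^\vee)$. So the projective representation of $\mathcal{G}'$ on $\cc^4$ given by $\sigma'$ and $\tau'$ would lift to a genuine linear representation of $G$ that is trivial on elements acting trivially on $\pp^3$, such as $\iota$ if $\iota\in G$.

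The main obstacle is ruling this out. The lifted matrices of $\sigma'$ and $\tau'$ anticommute: $\sigma'\tau'=-\tau'\sigma'$ on $\cc^4$. The group $G$ may also contain $\iota$ or extra central elements, which could in principle absorb the sign. To control this we compute $\partial(H)$ directly. We restrict to the subgroup $G$ generated by the two chosen lifts and compute the commutator of these lifts in $\aut(X)$.

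The plan is to arrange the $w$-action so that the commutator is trivial in $\aut(X)$, making $G\cong(\zz/2\zz)^2$, while on $H^0(X,H)$ it acts as $-1$. Commuting elements of $G$ whose linear lifts anticommute cannot exist in a genuine linear action. Hence $\partial(H)$ is the nontrivial class in $H^2((\zz/2\zz)^2,\cc^\times)\cong\zz/2\zz$, and $\am(X,G)\cong\zz/2\zz$.

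If the $w$-signs force the commutator to equal $\iota$ instead, we adjust the lifts by $\iota$, or enlarge $G$ to contain $\iota$ and redo the argument using the central extension. In either case, the failure of linearizability comes from the Schur multiplier class of the action on $\pp^3$.
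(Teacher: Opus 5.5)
Your proof is correct. The upper bound is the same as the paper's; the sharpness example uses a different group, and that changes how non-linearizability is verified.

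The paper takes the group $\mathcal G\cong(\zz/4\zz)^2$ of \eqref{dihedralorder8} with the Fermat quartic, uses $\am(\pp^3,\mathcal G)\cong\zz/4\zz$, and passes to the full preimage $\bar{\mathcal G}\subset\aut(X)$. There the commutator of lifts of $\sigma,\tau$ is $(x,w)\mapsto(ix,w)$, which in $\pp(1,1,1,1,2)$ equals the covering involution $\iota$. So $\bar{\mathcal G}$ is a nonabelian central extension of $\mathcal G$ by $\langle\iota\rangle$. The paper then has to argue that inflation to $\bar{\mathcal G}$ kills exactly the class of $2L$ (because $2H=-K_X$) before applying Proposition \ref{g-eq-divisors}.

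You use $\mathcal G'\cong(\zz/2\zz)^2$ of \eqref{c4c4semidirectc4} instead. The commutator of any two lifts is $(x,w)\mapsto(-x,w)$, which is the identity of $\pp(1,1,1,1,2)$, since $-1$ acts on $w$ with weight $2$. Hence $G\cong\mathcal G'$ embeds in $\aut(X)$ and maps isomorphically onto its image in $\operatorname{PGL}_4(\cc)$. Non-linearizability of $H$ then follows directly from the anticommutation of $\sigma'$ and $\tau'$; equivalently, $\am(X,G)\cong\am(\pp^3,\mathcal G')\cong\zz/2\zz$ by Proposition \ref{g-eq-divisors}. Your route avoids any bookkeeping with central extensions, while the paper's shows how a $\zz/4\zz$ class on $\pp^3$ drops to $\zz/2\zz$ on $X$.

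You should state the commutator computation outright rather than phrase it as a plan. It holds for every choice of signs on $w$, so your closing contingency never arises. The proposed fix of modifying lifts by $\iota$ could not change a commutator anyway, since $\iota$ is central.

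One parenthetical claim is inaccurate: a linearization need not act trivially on elements acting trivially on $\pp^3$, only by scalars. This does not affect your argument, because $\iota\notin G$.
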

\begin{proof}
Consider the group $\mathcal{G}\cong (\zz/4\zz)^2$ generated by automorphisms of $\pp^3$ from \eqref{dihedralorder8}
\begin{equation*}      
\begin{split}
   \sigma :[x_0:x_1:x_2:x_3] &\mapsto [x_0:ix_1:-x_2:-ix_3]\\
    \tau : [x_0:x_1:x_2:x_3] &\mapsto [x_3:x_0: x_1:x_2],   
\end{split}
\end{equation*}
where $x_0,x_1,x_2$, and $x_3$ are coordinates of $\mathbb{P}^3$. Suppose that $B$ is given by the equation
\[
x_0^4 + x_1^4 + x_2^4 + x_3^4 =0,
\]
it is a $\mathcal{G}$-invariant and can be verified to be smooth and irreducible quartic surface in $\pp^3$. 
So, $\mathcal{G} \subseteq \aut(\pp^3;B)$. Let $\bar{\mathcal{G}}\subset \aut(X)$ be the preimage of $\mathcal{G}$.

From Example \ref{amforP3}, we know that both $L$ and $2L$ are not $\mathcal{G}$-linearizable, so $\am(\pp^3,\mathcal{G})\cong \zz/4\zz$. Furthermore, we observe that $\am(\pp^3, \mathcal{G})\rightarrow \am(\pp^3,\bar{\mathcal{G}})$ is a surjection with kernel generated by the class of $B/2=2L$. Together with the Proposition \ref{g-eq-divisors}, we get
\[
\am(X,\bar{\mathcal{G}})\cong \am(\pp^3,\bar{\mathcal{G}}) \cong  \zz/2\zz.
\]
\end{proof}

The following observation is useful for determining the linearizability of line bundles on hypersurfaces.

\begin{prop}   \label{duncan-dandn+1-coprime}
Let $X$ be a smooth hypersurface of degree $d$ and $\mathcal{L}$ is a line bundle on $X$ such that $\phi_{|\mathcal{L}|} : X \to \pp^n$ is an embedding. Suppose that the integers $d$ and $n+1$ are coprime. Let $G$ be a finite subgroup acting faithfully on $\mathbb{P}^n$ such that $X$ is $G$-invariant. Then $\mathcal{L}$ is $G$-linearizable. 
\end{prop}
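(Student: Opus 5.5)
Since $\phi_{|\mathcal{L}|}$ is an embedding, $\mathcal{L}\cong \mathcal{O}_X(1)$ is the restriction of $\mathcal{O}_{\pp^n}(1)$, and its class is $G$-invariant because $G$ acts on $\pp^n$ preserving $X$. The plan is to find two $G$-linearizable multiples of $\mathcal{L}$, namely $(n+1)\mathcal{L}$ and $d\mathcal{L}$, and then combine them using $\gcd(d,n+1)=1$. Concretely, $\partial(\mathcal{L})$ lies in $H^2(G,\cc^\times)$. If both $(n+1)\partial(\mathcal{L})=0$ and $d\,\partial(\mathcal{L})=0$ hold, then choosing integers $a,b$ with $a(n+1)+bd=1$ gives $\partial(\mathcal{L})=0$. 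By exactness of \eqref{Exactsequence:Amitsur}, $\mathcal{L}$ is then $G$-linearizable.

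For the multiple $n+1$: lift $G\subset \operatorname{PGL}_{n+1}(\cc)$ to its preimage $\tilde G\subset \operatorname{SL}_{n+1}(\cc)$. This is a central extension of $G$ by $\mu_{n+1}$. The class $\partial(\mathcal{O}_{\pp^n}(1))\in H^2(G,\cc^\times)$ is the obstruction class of this projective representation, and it is killed by $n+1$, since an $\operatorname{SL}$-lift exists with cocycle valued in $\mu_{n+1}$. Equivalently, $\mathcal{O}_{\pp^n}(n+1)=-K_{\pp^n}$ is linearizable by Proposition \ref{KXlinearizable}. Restricting this linearization to $X$ via the equivariant inclusion $X\hookrightarrow \pp^n$ shows $(n+1)\partial(\mathcal{L})=0$. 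Here one uses naturality of $\partial$ under pullback along equivariant maps.

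For the multiple $d$: let $F$ be a defining equation of $X$, a degree $d$ form in $H^0(\pp^n,\mathcal{O}(d))$. Since $X$ is $G$-invariant, for any lift of $g$ to $\operatorname{GL}_{n+1}$ we have $g^*F=\chi(g)F$ for a scalar. So the line $\cc F\subset H^0(\pp^n,\mathcal{O}(d))$ is a one-dimensional subrepresentation for the $G$-action on $\mathcal{O}_{\pp^n}(d)$. Proposition \ref{Gsubrepn_result}, applied on $\pp^n$ with $\mathcal{O}(d)$, then gives $d\,\partial(\mathcal{O}_{\pp^n}(1))=\partial(\mathcal{O}_{\pp^n}(d))=0$, and restricting to $X$ gives $d\,\partial(\mathcal{L})=0$.

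An alternative route to the same conclusion uses adjunction. We have $K_X=(d-n-1)\mathcal{L}|_X$, and $K_X$ is linearizable by Proposition \ref{KXlinearizable}, so $(d-n-1)\partial(\mathcal{L})=0$. Combined with $(n+1)\partial(\mathcal{L})=0$, this again gives $d\,\partial(\mathcal{L})=0$.

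The main obstacle is justifying compatibility of $\partial$ with restriction to $X$: that $\partial_X(\mathcal{O}_{\pp^n}(k)|_X)=\partial_{\pp^n}(\mathcal{O}_{\pp^n}(k))$. I would handle this by noting that pulling back a linearization along the $G$-equivariant closed immersion gives a linearization, so the map $\pic(\pp^n)^G\to\pic(X)^G$ commutes with the maps $\partial$, as in the diagram in Proposition \ref{g-eq-divisors}. The remaining steps are then formal.
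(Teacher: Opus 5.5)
Your proof is correct, and it takes a different route from the paper only because the paper gives no argument of its own: it defers to \cite[Lemma 2.4]{abban2025kstabilityfano3foldsworld}. What you supply is therefore a self-contained proof inside the paper's own framework, and it checks out.
\begin{itemize}
\item The class $\partial(\mathcal{L})$ is killed by $n+1$, because $\mathcal{O}_{\pp^n}(n+1)=-K_{\pp^n}$ is linearizable (Proposition \ref{KXlinearizable}).
\item It is killed by $d$, because $\cc F$ is a one-dimensional subrepresentation of $H^0(\pp^n,\mathcal{O}(d))$, so Proposition \ref{Gsubrepn_result} applies. This uses that $F$ is determined up to a scalar by the reduced hypersurface $X$. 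Your alternative via adjunction, $K_X=\mathcal{O}_X(d-n-1)$, gives the same conclusion.
\item B\'ezout then gives $\partial(\mathcal{L})=0$.
\end{itemize}

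The restriction step you flag as the main obstacle is milder than you suggest. You only need the easy direction: a linearization of $\mathcal{O}_{\pp^n}(k)$ pulls back along the equivariant inclusion $X\hookrightarrow\pp^n$ to a linearization of $k\mathcal{L}$. Full naturality of $\partial$ does also hold. It shows the statement is equivalent to $G\subset\operatorname{PGL}_{n+1}(\cc)$ lifting to $\operatorname{GL}_{n+1}(\cc)$, which is a form in which such results are often phrased.

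What your route buys:
\begin{itemize}
\item The adjunction variant uses nothing beyond Proposition \ref{KXlinearizable}.
\item Without any coprimality hypothesis, it shows that the Amitsur period of $\mathcal{L}$ divides $\gcd(d,n+1)$.
\item It is the same ``kill the class by two coprime integers'' argument the paper runs by hand in Lemma \ref{Am:1.15}, with periods dividing $7$ and $2$. So the external citation could be replaced by a few lines.
\end{itemize}
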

We refer to \cite[Lemma 2.4]{abban2025kstabilityfano3foldsworld} for a proof.

\medskip

Consider families \textnumero 1.13 and \textnumero 1.16. In both cases, the divisor class corresponding to the pullback line bundle $\mathcal{O}_{\pp^4}(1)|_X$ is $[H]$ and there is a canonical embedding $\phi_{|H|}: X \hookrightarrow \pp^4$. 

\begin{coro}  \label{Am:1.13,1.16}
    Let $X$ be a smooth Fano threefold in \textnumero $1.13$ or \textnumero $1.16$. Let $G\subseteq \aut(X)$ be a finite group. Then $\am(X,G)=0$.
\end{coro}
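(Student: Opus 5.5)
We want to apply Proposition~\ref{duncan-dandn+1-coprime} to the anticanonical-type embedding $\phi_{|H|}\colon X\hookrightarrow\pp^4$. For \textnumero 1.13, $X$ is a smooth cubic hypersurface in $\pp^4$, so $d=3$. For \textnumero 1.16, $X$ is a smooth quadric in $\pp^4$, so $d=2$. In both cases $n+1=5$, and $\gcd(3,5)=\gcd(2,5)=1$, so the coprimality hypothesis holds.

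The remaining point is to check that any finite $G\subseteq\aut(X)$ acts faithfully on $\pp^4$ with $X$ invariant.

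First, since $\pic(X)=\zz[H]$ and $-K_X=rH$ with $r=2$ or $3$, the class $H$ is $G$-invariant. Indeed, $g^*H$ is ample with $r\,g^*H=-K_X$, and $\pic(X)$ is torsion free.

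Next, $G$ acts on the linear system $|H|$, and hence on $\pp(H^0(X,H)^\vee)=\pp^4$. This action is by projective transformations, so it gives a homomorphism $G\to\operatorname{PGL}_5(\cc)$.

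The embedding $\phi_{|H|}$ is $G$-equivariant, so the image $X$ is $G$-invariant.

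The action on $\pp^4$ is faithful: an element acting trivially on $\pp^4$ fixes the embedded $X$ pointwise, and since $G\subseteq\aut(X)$ acts faithfully on $X$, that element is the identity.

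With these hypotheses verified, Proposition~\ref{duncan-dandn+1-coprime} gives that $\mathcal{L}=\mathcal{O}_X(H)$ is $G$-linearizable. Since $\pic(X)^G=\zz[H]$ is generated by $H$, the map $\pic(X,G)\to\pic(X)^G$ is surjective, and therefore $\am(X,G)=0$.

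The only step that needs any care is the faithfulness of the induced action on $\pp^4$ together with the $G$-invariance of $H$. Both are routine, because $H$ generates the torsion-free group $\pic(X)$ and $X$ is embedded by $|H|$.
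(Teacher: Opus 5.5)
Your proof is correct and takes the same route as the paper: both apply Proposition~\ref{duncan-dandn+1-coprime} to $X\subset\pp^4$, with $d\in\{2,3\}$ coprime to $n+1=5$. The only difference is that you check explicitly that $G$ acts faithfully on $\pp(H^0(X,H)^\vee)$ and preserves $X$, whereas the paper just states that automorphisms of $X$ are restrictions of automorphisms of $\pp^4$.
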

\begin{proof}
If $X$ is in \textnumero 1.13, it is a hypersurface of degree $3$ and if it is in \textnumero 1.16, it is a hypersurface of degree $2$. Note that automorphisms of $X$ are obtained from those of $\mathbb{P}^4$ by restricting them to $X$. So, if $G\subseteq \aut(X)$ is a finite group, we may assume that $G\subset \aut(\pp^4)$. Thus, $H$ is $G$-linearizable by the Proposition \ref{duncan-dandn+1-coprime}, and consequently, $\am(X,G)=0$.  
\end{proof}

Suppose now that $X$ is a smooth Fano threefold in \textnumero 1.14 or \textnumero 1.15, then it has index $r=2$; so, $-K_X =2H$ and we have $
\am(X,G) \subseteq \mathbb{Z}/2\mathbb{Z}$. 
We now verify whether there is a finite group $G\subseteq \aut(X)$ such that $H$ is $G$-linearizable.

\begin{lemma}  \label{Am:1.14}
    Let $X$ be a smooth Fano threefold in family \textnumero $1.14$. Let $G\subseteq \aut(X)$ be a finite group. Then $\am(X,G)$ is isomorphic to $\zz/2\zz$ or trivial.
\end{lemma}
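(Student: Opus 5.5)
The upper bound is immediate. Family \textnumero 1.14 has Picard rank $1$ and index $r=2$, so $\pic(X)^G=\pic(X)=\zz[H]$ and $-K_X=2H$. By Proposition \ref{KXlinearizable}, $-K_X$ is $G$-linearizable, hence $\am(X,G)\subseteq \zz/2\zz$ (this is the index-$r$ proposition at the start of \S\ref{Amrank1}).

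The substance is realizing $\zz/2\zz$, so I will exhibit a smooth $X$ and a finite $G$ for which $H$ is not $G$-linearizable. Recall that $X$ is a smooth complete intersection of two quadrics in $\pp^5$, embedded by $|H|$. Write it in diagonal form
\[
X=\Big\{\sum_{i=0}^5 x_i^2=0,\ \sum_{i=0}^5 \lambda_i x_i^2=0\Big\}\subset \pp^5,
\]
with the $\lambda_i$ pairwise distinct. Since every automorphism is induced from $\pp^5$, it suffices to take $G\subset \operatorname{PGL}_6(\cc)$ preserving the pencil, so that $G$ lifts to $\aut(X)$. Note that Proposition \ref{duncan-dandn+1-coprime} does not apply here (the degree $4$ and $6$ are not coprime), which leaves room for an obstruction.

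My choice is the sign-change group $G\cong(\zz/2\zz)^2\subset(\zz/2\zz)^5$ generated by the images of
\[
\sigma=\operatorname{diag}(1,1,-1,-1,1,-1)\quad\text{and}\quad \sigma'=\operatorname{diag}(1,-1,1,-1,-1,1),
\]
or, if that turns out to lift, a subgroup combining sign changes with a permutation of coordinates that preserves the pencil, chosen for a suitable pencil (e.g. $\lambda_i$ arranged so that a permutation acts compatibly). The obstruction is that $H$ is $G$-linearizable if and only if $G\subset\operatorname{PGL}_6$ lifts to $\operatorname{GL}(H^0(X,H)^\vee)=\operatorname{GL}_6$, i.e.\ the class $\partial(H)\in H^2(G,\cc^\times)$ vanishes. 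I would therefore choose two commuting elements of $\operatorname{PGL}_6$ whose lifts anticommute, a Heisenberg-type pair.

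For instance, take $a=\operatorname{diag}(1,-1,1,-1,1,-1)$ and $b$ the permutation $(0\,1)(2\,3)(4\,5)$. Then $ab=-ba$ in $\operatorname{GL}_6$, and this anticommutation persists under any rescaling of the lifts, so $\partial(H)\neq0$. I then need a smooth pencil invariant under both: $Q_1=x_0x_1+x_2x_3+x_4x_5$ and $Q_2=x_0^2+x_1^2+\mu(x_2^2+x_3^2)+\nu(x_4^2+x_5^2)$ are each invariant up to sign under $a$ and $b$.

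The main obstacle is checking that $X=\{Q_1=Q_2=0\}$ is smooth, which amounts to the discriminant $\det(sQ_1+tQ_2)$ having six distinct roots. Since $\det(sQ_1+tQ_2)$ factors into three $2\times2$ blocks, $\prod_j \big((2c_jt)^2-s^2\big)/\ldots$ with $c_j\in\{1,\mu,\nu\}$, its roots are $s=\pm 2c_jt$. These are distinct for general distinct nonzero $\mu,\nu$ with $\mu,\nu\neq\pm1$ and $\mu\neq\pm\nu$. With this, $\am(X,G)\cong\zz/2\zz$, and the trivial case is realized by $G=\{1\}$.
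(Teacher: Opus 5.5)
Your proof is correct and follows the same strategy as the paper. Both use the index bound $\am(X,G)\subseteq\zz/2\zz$, then exhibit a Klein four-group in $\operatorname{PGL}_6(\cc)$ whose lifts anticommute and which preserves a smooth intersection of two quadrics. The paper uses $\operatorname{diag}(-1,-1,-1,1,1,1)$ with a twisted block swap on a diagonal pencil and checks smoothness by Magma; your pair $a,b$ with $\langle Q_1,Q_2\rangle$ has the small advantage that smoothness is verified by hand from the distinct discriminant roots $\pm2,\pm2\mu,\pm2\nu$. Drop your first candidate $\langle\sigma,\sigma'\rangle$, though: its generators are commuting diagonal involutions already in $\operatorname{GL}_6(\cc)$, so the action lifts and $H$ is linearizable for that group.
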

\begin{proof}
If $X$ is in \textnumero $1.14$, it is a complete intersection of two quadrics $Q_1$ and $Q_2$ in $\mathbb{P}^5$. Note that $[\mathcal{O}_{\pp^5}(1)]=[H]$ and recall that it induces $\phi_{|H|}: X \hookrightarrow \pp^5$. Let $\sigma$, $\tau $ be automorphisms of $\cc^6$ defined as
\begin{align}
\begin{split}
    \sigma : (x_0:x_1:x_2:x_3:x_4:x_5) &\mapsto (-x_0:-x_1:-x_2:x_3:x_4:x_5)\\
    \tau : (x_0:x_1:x_2:x_3:x_4:x_5) &\mapsto (\zeta x_3:\zeta x_4:\zeta x_5: x_0:x_1:x_2),  
\end{split}
\end{align}
where $x_0, \dots ,x_5$ are the coordinates of $\cc^6$ and $\zeta$ is a primitive sixth root of unity. Denote their images in $\operatorname{PGL}_6(\cc)$ by $\bar{\sigma}$ and $\bar{\tau}$ and let $\mathcal{G}:=\langle \bar{\sigma} , \bar{\tau}\rangle \cong (\zz/2\zz)^2$.
Consider the quadrics
\begin{align} 
\begin{split}
    Q_1 &: a_0x_0^2 + a_1x_1^2 +  a_2x_2^2 +  a_3 x_3^2 +  a_4 x_4^2 +  a_5 x_5^2=0\\
    Q_2 &: a_3 x_0^2 +  a_4 x_1^2 + a_5 x_2^2 + \zeta^2(a_0 x_3^2 + a_1 x_4^2 + a_2 x_5^2)=0.  
\end{split}
\end{align}
where $a_i \in \mathbb{C}$ for $0 \leq i \leq 5$ are distinct. Observe that the complete intersection $Q_1\cap Q_2$ is $\mathcal{G}$-invariant: $\overline{\sigma}$ fixes each quadric, $\overline{\tau}$ preserves the pencil generated by $Q_1$ and $Q_2$. The smoothness and irreducibility of $Q_1\cap Q_2$ can be verified using Magma (\cite{magma}); for example, one may choose 
$(a_0,a_1,a_2,a_3,a_4,a_5)=(1,-1,2,-2,3,-3)$. 
  However, the $\mathcal{G}$-action on $\pp^5$ does not lift to a homomorphism into $\operatorname{GL}_6(\mathbb{C})$. It follows that $H$ is not $\mathcal{G}$-linearizable, and hence, $\am(X,\mathcal{G})\cong\mathbb{Z}/2\mathbb{Z}$.
\end{proof}

The unique smooth Fano threefold $X$ in \textnumero $1.15$ is a section of Pl\"{u}cker embedding of the Grassmannian $\operatorname{Gr}(2,5)$ by codimension $3$ subspace \cite{isko1, MM81classtable}.

\begin{lemma}  \label{Am:1.15}
    Let $X$ be a smooth Fano threefold in family \textnumero $1.15$. Let $G\subseteq \aut(X)$ be a finite group. Then we have $\am(X,G)= 0$.    
\end{lemma}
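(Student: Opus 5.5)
Since $X$ has Picard rank $1$ and index $r=2$, with $-K_X=2H$, Proposition \ref{KXlinearizable} already gives $\am(X,G)\subseteq\zz/2\zz$, generated by $\partial(H)$. So it suffices to show that $H$ is $G$-linearizable for every finite $G\subseteq\aut(X)$. Equivalently, we must produce a linear lift of $G$ to $H^0(X,H)$, or find a $G$-subrepresentation of odd dimension in some $H^0(X,kH)$ with $k$ odd.

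The first approach uses the structure of $\aut(X)$. For the unique smooth $V_5$ it is classical (Mukai--Umemura) that $\aut(X)\cong\operatorname{PGL}_2(\cc)$. Moreover, $H^0(X,H)^\vee\cong\cc^7$ is the irreducible $7$-dimensional representation $S^6\cc^2$ of $\operatorname{SL}_2(\cc)$. On this representation $-1$ acts trivially, because $6$ is even. Hence the embedding $X\hookrightarrow\pp^6$ is $\operatorname{PGL}_2(\cc)$-equivariant, coming from the honest linear representation $S^6\cc^2$ of $\operatorname{SL}_2(\cc)/\{\pm1\}=\operatorname{PGL}_2(\cc)$. The lift of $\aut(X)$ to $\operatorname{GL}(H^0(X,H))$ is therefore an actual group homomorphism $\operatorname{PGL}_2(\cc)\to\operatorname{GL}_7(\cc)$. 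Restricting it to any finite $G$ gives a linearization of $H$, so $\partial(H)=0$ and $\am(X,G)=0$.

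As a backup that avoids the precise identification, I would note that $\phi_{|H|}$ embeds $X$ in $\pp^6$ and that $H^0(X,H)$ is $7$-dimensional. The group $G$ acts on $\pp^6$, giving a projective representation of $G$ of dimension $7$. By Proposition \ref{Gsubrepn_result}, applied to the whole space $H^0(X,H)$ viewed as a $G_H$-representation of dimension $d=7$, we get $7\partial(H)=0$. Combined with $2\partial(H)=0$, this forces $\partial(H)=0$. This second argument is cleaner and needs only two facts: that $\dim H^0(X,H)=H^3+2=7$, by Riemann--Roch or from the embedding $X=X_5\subset\pp^{6}$ recalled above, and that the central extension $G_H$ acts linearly on this space, which is exactly the setup of Proposition \ref{Gsubrepn_result}.

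The main point to check is that Proposition \ref{Gsubrepn_result} really applies with $d=\dim H^0(X,H)$. It should, since the whole space is trivially a subrepresentation. The rest is immediate, so I expect no serious obstacle.
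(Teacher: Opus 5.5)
Your proposal is correct. Your second (``backup'') argument is the paper's proof. The paper views $G$ inside $\operatorname{PGL}_7(\cc)$ via $\phi_{|H|}$ and lets the lifting group act linearly on the $7$-dimensional space. It then applies Proposition \ref{Gsubrepn_result} with $d=7$ and combines $7\partial(H)=0$ with $2\partial(H)=0$, the latter coming from $-K_X=2H$ and Proposition \ref{KXlinearizable}.

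Your first argument is a genuinely different and also valid route. It uses the Mukai--Umemura description $\aut(X)\cong\operatorname{PGL}_2(\cc)$ with $H^0(X,H)\cong S^6\cc^2$, and it produces a single linearization of $H$ for the whole group $\aut(X)$. So it proves more: every subgroup, finite or not, inherits one fixed linearization. The price is dependence on that external classification. The paper's route needs only that $h^0(X,H)=7$ is coprime to the index $2$. This is the same mechanism as Proposition \ref{duncan-dandn+1-coprime}, and it applies verbatim to other Picard-rank-one cases where $h^0(X,H)$ is coprime to the index.

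The two arguments in fact share this arithmetic core. Even without identifying the representation as $S^6\cc^2$, the composite $\operatorname{SL}_2(\cc)\to\aut(X)\to\operatorname{PGL}_7(\cc)$ lifts to $\operatorname{SL}_7(\cc)$ because $\operatorname{SL}_2(\cc)$ is simply connected. The element $-1$ must then act by a scalar $\lambda$ with $\lambda^2=\lambda^7=1$, so it acts trivially.
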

\begin{proof}
Let $G\subset \aut(X)$ be a finite group. 
Recall that there is an embedding $ \phi_{|H|}:X \hookrightarrow \mathbb{P}^6$ induced by $|H|$, so we may regard $G$ as a subgroup of $\aut(\mathbb{P}^6)\cong \operatorname{PGL}_7(\mathbb{C})$. This implies that there is a projective representation of $G$ of dimension $7$. If $\widetilde{G}\subset \operatorname{SL}_7(\cc)$ is a subgroup of the lifting group of $G$ such that there is a short exact sequence 
\[
1 \to \zz/7\zz \to \widetilde{G} \to G \to 1,
\]
see \cite[\S 2.1 (The lifting group)]{duncan2025numericalamitsurgroup}, then $\widetilde{G}$ has a linear representation of dimension $7$. It follows from Proposition \ref{Gsubrepn_result} that the Amitsur period $m$ of $H$ divides $7$. Since $-K_X=2H$, $m$ also divides $2$. Therefore, $H$ must be $G$-linearizable, and $\am(X,G)=0$. 
\end{proof}
 

\section{\texorpdfstring{Divisors on $\pp^2\times\pp^2$}{Divisors in P2P2}}   \label{divisorsonP2P2}
Let $X$ be a smooth Fano threefold belonging to one of the following deformation families:
\begin{center}
    \textnumero 2.6 (a),\; \textnumero 2.24,\; \textnumero 2.32.
\end{center}
Recall that, by Mori-Mukai classification, a Fano threefold in ~\textnumero 2.6 admits two descriptions: as a divisor (\textnumero 2.6\;(a)) or as a double cover (\textnumero 2.6\;(b)). 
In this section, we consider only the divisor description. The case of Family~\textnumero 2.6(b) will be treated separately in \S\ref{Am:2.6(b)discussion}.

If $X$ is a member of \textnumero 2.6 (a), \textnumero 2.24, or \textnumero 2.32, it can be described as a divisor of bidegree $(n,m)$ on $\pp^2\times \pp^2$ (\cite[Table 2]{MM81classtable}). Let $\iota : X \hookrightarrow \mathbb{P}^2 \times \mathbb{P}^2$ be the embedding of $X$. 
Let $H_1$ and $H_2$ denote the pullbacks to $\pp^2\times\pp^2$ of the hyperplane classes from the first and second factors of $\pp^2$, respectively. Then $X \in |nH_1+mH_2|$ and $\operatorname{Pic}(X)= \zz [\iota^*H_1,\iota^*H_2] $. 

\smallskip 

Suppose that $G$ is a subgroup of $\aut(\mathbb{P}^2\times \mathbb{P}^2)$ such that $X$ is $G$-invariant. We also recall the following about the action of $G$ on the Picard group for these families (Table \ref{table:primitive})
\[
\autp(X) = \begin{cases}
     \zz/2\zz  & \text{\textnumero }2.6(a), \;\text{\textnumero }2.32 \\
     0  & \text{\textnumero } 2.24.
\end{cases}
\]
\begin{remark}   \label{remark:equivariantprojections}
\normalfont    Suppose that $\autp(X,G)=0$. Then $\pic(X)^G= \zz[\iota^*H_1,\iota^*H_2]$ and the projection maps $\pp^2\times\pp^2 \to \pp^2$ are $G$-equivariant. Therefore, the pullbacks $3\iota^*H_i$ of canonical divisors $-K_{\pp^2}$ to $X$ are both $G$-linearizable, $i=1,2$.
\end{remark}
Let $\overline{\iota^*H_i}$ denote the image of $\iota^*H_i$ under the map $\pic(X)^G \to \am(X,G)$ for $i=1,2$. \smallskip

In Lemma \ref{Am:2.6b}, we will show that $\am(X,G)\subseteq \zz/3\zz$ for any finite group $G\subseteq \aut(X)$, where $X$ is in family \textnumero 2.6(b). We now prove that the analogous holds for family~\textnumero 2.6(a).
\begin{lemma}    \label{Am:2.6a}
    Let $X$ be a Fano threefold in \textnumero 2.6(a). Let $G\subseteq \aut(X)$ be a finite group. Then $\am(X,G)$ is isomorphic to $\zz/3\zz$ or trivial.
\end{lemma}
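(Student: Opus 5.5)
We need to show that $\am(X,G)$ is either trivial or $\zz/3\zz$. Family \textnumero 2.6(a) consists of divisors of bidegree $(2,2)$ on $\pp^2\times\pp^2$. By adjunction, $-K_X=\iota^*(3H_1+3H_2-2H_1-2H_2)=\iota^*H_1+\iota^*H_2$. Write $h_i=\iota^*H_i$. By Proposition \ref{KXlinearizable}, $h_1+h_2$ is $G$-linearizable for every finite $G\subseteq\aut(X)$. The plan is to split according to $\autp(X,G)$, which is either trivial or $\zz/2\zz$ (the latter swapping $h_1$ and $h_2$).

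\textbf{Case $\autp(X,G)=0$.} Here $\pic(X)^G=\zz h_1\oplus\zz h_2$. By Remark \ref{remark:equivariantprojections}, $3h_1$ and $3h_2$ are $G$-linearizable, since they are pullbacks of $-K_{\pp^2}$ under the two equivariant projections. Hence $\am(X,G)$ is generated by $\overline{h_1}$ and $\overline{h_2}$, subject to
\[
\overline{h_1}+\overline{h_2}=0, \qquad 3\overline{h_1}=0.
\]
So $\am(X,G)$ is cyclic, generated by $\overline{h_1}$, and is a quotient of $\zz/3\zz$.

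\textbf{Case $\autp(X,G)=\zz/2\zz$.} Now $\pic(X)^G=\zz(h_1+h_2)$. This class is $G$-linearizable because it equals $-K_X$, so $\am(X,G)=0$. Combining the two cases gives the upper bound.

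\textbf{Sharpness.} It remains to exhibit an $X$ and a $G$ with $\am(X,G)\cong\zz/3\zz$. The plan is to take the Heisenberg group $\mathcal{H}\cong(\zz/3\zz)^2\subset\operatorname{PGL}_3(\cc)$, generated by $x_i\mapsto\omega^i x_i$ and the cyclic permutation of the coordinates. Its projective representation on $\cc^3$ does not lift to a linear one, so $H=\mathcal{O}_{\pp^2}(1)$ is not $\mathcal{H}$-linearizable. Let $\mathcal{H}$ act diagonally on $\pp^2\times\pp^2$, and choose a smooth $\mathcal{H}$-invariant divisor $X$ of bidegree $(2,2)$. For example, take $X$ inside the invariant family spanned by $\sum_i x_i^2y_i^2$, $\sum_i x_i^2y_{i+1}y_{i+2}$, and similar invariant monomial sums; smoothness of a specific member would be checked by computer, as was done in Lemma \ref{Am:1.14}.

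The main obstacle is showing that $h_1$ is not $\mathcal{H}$-linearizable on $X$ itself, and not merely on $\pp^2$. The restriction of the first projection $X\to\pp^2$ is a conic bundle, and $H^0(X,h_1)=H^0(\pp^2,\mathcal{O}(1))$ is the irreducible $3$-dimensional projective representation. If $h_1$ were linearizable on $X$, this space would carry a genuine linear action of $\mathcal{H}$ inducing the given projective one. That is impossible, because the Schur multiplier class of this Heisenberg action is nontrivial. Hence $\overline{h_1}\neq0$ and $\am(X,\mathcal{H})\cong\zz/3\zz$.
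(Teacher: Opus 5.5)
Your upper bound is correct and is the paper's argument. You split by $\autp(X,G)$. In the swapping case you use $-K_X=h_1+h_2$. In the trivial case you use Proposition~\ref{KXlinearizable} together with Remark~\ref{remark:equivariantprojections}, which makes $3h_i$ linearizable.

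\textbf{The sharpness half fails.} For the diagonal Heisenberg action there is \emph{no} invariant divisor of bidegree $(2,2)$ at all. Concretely, $\sigma$ multiplies $x^ay^b$ by $\omega^{a_1+2a_2+b_1+2b_2}$. The cyclic shift changes this weight by $\pm(|a|+|b|)=\pm4\not\equiv 0 \pmod 3$, so no nonzero $(2,2)$-form is an eigenvector of both generators. For example, $\sigma$ sends $\sum_i x_i^2y_i^2$ to $\sum_i \omega^{i}x_i^2y_i^2$, so your proposed ``invariant family'' is empty.

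Your own relation $\overline{h_1}+\overline{h_2}=0$ already rules this action out. With the same projective representation on both factors, $\partial(h_1)=\partial(h_2)=\alpha\neq 0$. Hence $\partial(-K_X)=2\alpha\neq 0$, contradicting Proposition~\ref{KXlinearizable}. Equivalently, $\mathcal{O}(2,2)$ is not linearizable on $\pp^2\times\pp^2$, so it carries no invariant divisor.

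\textbf{The fix.} The second factor must carry a projective representation with the \emph{inverse} Schur class, so that $\mathcal{O}(1,1)$ has trivial class while $\mathcal{O}(1,0)$ does not. One way is to let $\sigma$ act on the $y$'s by $y_i\mapsto\omega^{-i}y_i$ (the contragredient). Then the weight is $a_1+2a_2-b_1-2b_2$, it is preserved by the shift, and your cyclic sums $\sum_i x_i^2y_i^2$ and $\sum_i x_i^2y_{i+1}y_{i+2}$ really are invariant. Another way, as in \eqref{2-6agroup}, is to interchange the roles of the two generators on the second factor.

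You still must exhibit a smooth member for whichever action you use. The paper does this with the explicit equation \eqref{eqn-for-2.6(a)} and a computer check. Once the action is corrected, your transfer argument works as written. It rests on $H^0(X,h_1)\cong H^0(\pp^2,\mathcal{O}(1))$, which holds because $\mathcal{O}(-1,-2)$ has no cohomology on $\pp^2\times\pp^2$. It correctly shows that $h_1$ is not linearizable on $X$ itself, a step the paper only asserts.
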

\begin{proof}
 A Fano threefold $X$ in family \textnumero 2.6~(a) is a divisor of bidegree $(2,2)$ on $\pp^2\times\pp^2$ with anticanonical divisor $-K_X = \iota^*H_1 + \iota^*H_2$. We first establish an upper bound on $\am(X,G)$.

Let $G \subseteq \aut(X)$ be a finite group. First suppose that $\autp(X,G)=\zz/2\zz$. Then $\pic(X)^G=\zz[-K_X]$. By Proposition \ref{KXlinearizable}, $-K_X$ is $G$-linearizable for any $G\subset \aut(X)$. Thus, $\am(X,G)=0$.

Now suppose that $\autp(X,G)=0$. By Remark \ref{remark:equivariantprojections}, we have 
\begin{equation}   \label{relation-for-2.6(a)}
\overline{\iota^*H_1} =2\overline{\iota^*H_2},\; \overline{\iota^*H_2}=2\overline{\iota^*H_1}.
\end{equation}
It now follows that $\am(X,G) \subseteq \langle\overline{\iota^*H_1}\rangle \cong \zz/3\zz$. 

To complete the proof, it suffices to show that there is Fano threefold $X$ in \textnumero 2.6 and a finite group $G$ such that the upper bound on $\am(X,G)$ is sharp. We do this by constructing a member of \textnumero 2.6~(a).

Consider $\mathcal{G}\cong (\zz/3\zz)^2$ generated by automorphisms
\begin{align}   \label{2-6agroup}
\begin{split}
\sigma : ([x_0:x_1:x_2],[y_0:y_1:y_2]) &\mapsto ([x_2:x_0:x_1],[y_0:\omega y_1:\omega^2 y_2]) \\
\tau  : ([x_0:x_1:x_2],[y_0:y_1:y_2]) &\mapsto ([x_0:\omega x_1:\omega^2 x_2],[y_2:y_0:y_1]), 
\end{split}
\end{align}
where $([x_0:x_1:x_2],[y_0:y_1:y_2])$ are projective coordinates on $\pp^2\times\pp^2$ and $\omega$ is a primitive cube root of unity.
Observe that $\autp(X,\mathcal{G})= 0$. 
Now consider the hypersurface defined by the equation
\begin{equation}  \label{eqn-for-2.6(a)}
\begin{aligned}
& x_0^2(y_0^2 + y_1^2 + y_2^2 + y_0y_1 + y_1y_2 + y_0y_2) + \\
& x_1^2(y_0^2 + \omega y_1^2 + \omega^2 y_2^2 + \omega^2 y_0y_1 +  y_1y_2 + \omega y_0y_2) +\\
& x_2^2(y_0^2 + \omega^2 y_1^2 + \omega y_2^2 + \omega y_0y_1 + y_1y_2 
 + \omega^2 y_0y_2) + \\
& x_0x_1(y_0^2 + \omega^2 y_1^2 + 
\omega  y_2^2 + \omega y_0 y_1 + y_1 y_2 + \omega^2 y_0y_2) + \\
& x_1x_2(y_0^2 + y_1^2 + y_2^2 +
y_0y_1 + y_1y_2 +
y_0y_2) + \\
& x_0x_2(y_0^2 + \omega y_1^2 + \omega^2 y_2^2 + \omega^2 y_0y_1 + y_1y_2 + \omega y_0y_2)=0.
\end{aligned}
\end{equation} 
Then this hypersurface is $\mathcal{G}$-invariant. Moreover, using Magma \cite{magma}, we verify that it is both smooth and irreducible. Therefore, the equation \eqref{eqn-for-2.6(a)} defines a member of \textnumero 2.6(a).

Finally, since the $\mathcal{G}$-action does not lift to a linear representation to $\operatorname{GL}_3(\mathbb{C})^{2}$, the divisors $\iota^*H_1$ and $\iota^*H_2$ are not $\mathcal{G}$-linearizable. Thus, we have $\am(X,\mathcal{G}) \cong \mathbb{Z}/3\mathbb{Z}$. 
\end{proof}

Suppose that $X$ is Fano threefold in \textnumero 2.24, then Mori-Mukai describe it as a divisor of bidegree $(n,m)=(1,2)$ in $\pp^2\times\pp^2$. The anticanonical divisor is $-K_X= 2\iota^*H_1+\iota^*H_2$. We know that $\autp(X)=0$ (Table \ref{table:primitive}).

\begin{lemma}  \label{Am:2.24}
     Let $X$ be a Fano threefold in \textnumero 2.24 and $G\subseteq \aut(X)$ be a finite group. Then $\am(X,G)$ is isomorphic to $\zz/3\zz$ or trivial.
\end{lemma}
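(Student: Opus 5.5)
The argument has two parts: an upper bound obtained as in Lemma \ref{Am:2.6a}, and an explicit example showing the bound is attained.

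\textbf{Upper bound.} Since $\autp(X)=0$ (Table \ref{table:primitive}), every finite $G\subseteq\aut(X)$ acts trivially on $\pic(X)$. Hence $\pic(X)^G=\zz[\iota^*H_1,\iota^*H_2]$, and by Remark \ref{remark:equivariantprojections} we have $3\overline{\iota^*H_1}=3\overline{\iota^*H_2}=0$. (This uses that $G$ extends to $\pp^2\times\pp^2$ preserving both factors. The two projections of $X$ are its extremal contractions, a $\pp^1$-bundle-type conic bundle and a del Pezzo fibration, so they are $G$-equivariant. Moreover $\iota$ is induced by the $G$-invariant complete linear systems $|\iota^*H_i|$, each of dimension $2$, so $G$ acts on each $\pp^2$ factor.)

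By Proposition \ref{KXlinearizable}, $-K_X=2\iota^*H_1+\iota^*H_2$ is $G$-linearizable, so $\overline{\iota^*H_2}=-2\overline{\iota^*H_1}=\overline{\iota^*H_1}$. Therefore $\am(X,G)$ is generated by $\overline{\iota^*H_1}$, which has order dividing $3$, and so $\am(X,G)\subseteq\zz/3\zz$.

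\textbf{Sharpness.} It remains to exhibit a smooth divisor of bidegree $(1,2)$ with a group $\mathcal G$ for which $\iota^*H_1$ is not linearizable. I would reuse the group $\mathcal G\cong(\zz/3\zz)^2$ from \eqref{2-6agroup}. It acts on each $\pp^2$ factor through the Heisenberg-type projective representation, which does not lift to $\operatorname{GL}_3(\cc)$, and it acts trivially on $\pic$.

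I would then look for a $\mathcal G$-invariant form $F=x_0f_0(y)+x_1f_1(y)+x_2f_2(y)$, with $f_i$ quadrics in the $y_j$. Equivalently, I need a $\mathcal G$-equivariant map from the dual of the $x$-representation into $\operatorname{Sym}^2$ of the $y$-representation, up to a common character, so that the product is invariant projectively, i.e.\ an invariant up to scalar. The space $\operatorname{Sym}^2$ of the $3$-dimensional Heisenberg representation is $6$-dimensional and decomposes as two copies of the Heisenberg representation (with twisted central character). So matching cocycles gives an invariant trilinear-type form of the shape
\[
F=\sum_{i} x_i\,(a\,y_i^2+b\,y_{i+1}y_{i+2}),
\]
with the indexing adjusted to the given $\sigma,\tau$, possibly after relabelling the $y$-coordinates by $i\mapsto -i$.

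\textbf{Main obstacle.} Two things must be checked. First, the cocycles must match: the $x$-factor class and the $y$-factor class have to pair so that $F$ is semi-invariant. If they do not match with \eqref{2-6agroup} as given, I would replace $\omega$ by $\omega^2$ in one factor. Second, $F=0$ must be smooth for generic $a,b$. I would verify smoothness with Magma, as in the paper, for a specific choice such as $(a,b)=(1,1)$.

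Once such an $X$ is found, $\iota^*H_1$ is not $\mathcal G$-linearizable, since $\mathcal G$ does not lift to $\operatorname{GL}_3(\cc)$ on the first factor. Hence $\am(X,\mathcal G)\cong\zz/3\zz$. The trivial case is realized by taking $G$ trivial.
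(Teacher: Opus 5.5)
Your upper bound is the paper's argument: since $\autp(X)=0$, both projections are $G$-equivariant, so $3\iota^*H_1$ and $3\iota^*H_2$ are linearizable, and linearizability of $-K_X=2\iota^*H_1+\iota^*H_2$ gives $\overline{\iota^*H_2}=\overline{\iota^*H_1}$. There is one harmless slip in your parenthetical. Both extremal contractions of a $(1,2)$-divisor map onto $\pp^2$: one is a conic bundle and the other a $\pp^1$-bundle. There is no del Pezzo fibration.

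The sharpness part does not work as written, because the group \eqref{2-6agroup} admits no invariant divisor of bidegree $(1,2)$ at all. Take the lifts given by the formulas. On linear forms in $x$ one has $\tau^*\sigma^*=\omega^2\,\sigma^*\tau^*$, and on linear forms in $y$ one has $\tau^*\sigma^*=\omega\,\sigma^*\tau^*$. Hence on forms of bidegree $(1,2)$ you get $\tau^*\sigma^*=\omega\,\sigma^*\tau^*$. This is incompatible with $\sigma^*F=\alpha F$ and $\tau^*F=\beta F$ unless $F=0$. In bidegrees $(1,1)$ and $(2,2)$ the factor is $1$, which is why the paper can use this group in Lemmas \ref{Am:2.6a} and \ref{Am:2.32}. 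So your fallback is forced, not optional.

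Replacing $\omega$ by $\omega^2$ in one factor does fix the cocycle. However, it does not produce your displayed form after any relabelling of indices. In \eqref{2-6agroup} the generator that permutes the $x$-coordinates acts diagonally on $y$, and vice versa. For example, after the change in the $y$-factor, the invariants are $\sum_j u_j(ay_j^2+by_{j+1}y_{j+2})$ in the Fourier coordinates $u_j=\sum_i\omega^{-ij}x_i$.

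The paper avoids this by using a different group. There $\sigma$ acts by $[x_0:\omega x_1:\omega^2x_2]$ and $\tau$ by the same cyclic shift on both factors. For that group your form is literally invariant. With $a=b=1$ it is exactly the paper's hypersurface $x_0y_0^2+x_1y_1^2+x_2y_2^2+x_0y_1y_2+x_1y_0y_2+x_2y_0y_1=0$.

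You can also check smoothness by hand instead of with Magma. The partials $\partial F/\partial x_i$ are the conics $y_i^2+y_{i+1}y_{i+2}$, which have no common zero in $\pp^2$. So $F=0$ is smooth, and being an ample divisor it is connected, hence irreducible. With the group fixed, your non-liftability argument for $\iota^*H_1$ goes through as in the paper.
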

\begin{proof}
Let $G\subseteq \aut(X)$ be a finite group. Since $-K_X,\; 3\iota^*H_1$, and $3\iota^*H_2$ are $G$-linearizable (Proposition \ref{KXlinearizable}, Remark \ref{remark:equivariantprojections}), we have 
\[
\overline{\iota^*H_1} =\overline{\iota^*H_2},
\]
and therefore, $\am(X,G)$ is isomorphic to a subgroup of the cyclic group $\langle\overline{\iota^*H_1}\rangle \cong \zz/3\zz$. 

It is possible to realize $\am(X,G)\cong \zz/3\zz$ for some choice of $G$. Let $\omega$ be a primitive cube root of unity and define $\sigma, \tau \in \aut(\pp^2\times\pp^2)$ by
\begin{align*}
\sigma : ([x_0:x_1:x_2],[y_0:y_1:y_2])  &\mapsto ([x_0:\omega x_1:\omega^2 x_2],[y_0:\omega y_1:\omega^2 y_2])\\
\tau : ([x_0:x_1:x_2],[y_0:y_1:y_2])  &\mapsto ([x_1:x_2:x_0],[y_1:y_2:y_0]).
\end{align*}
Denote by $\mathcal{G}\cong (\zz/3\zz)^2$ the group generated by $\sigma$ and $\tau$.
The hypersurface
\[
x_0y_0^2 + x_1y_1^2 + x_2y_2^2 + x_0y_1y_2 +x_1y_0y_2 +x_2y_0y_1=0
\]
is $\mathcal{G}$-invariant and can be verified to be both smooth and irreducible, thus it defines a member of \textnumero 2.24.
Since the $\mathcal{G}$-action does not lift to a linear representation to $\operatorname{GL}_3(\cc)^{2}$, the divisor $\iota^*H_1$ is not $\mathcal{G}$-linearizable. So, $\am(X,\mathcal{G}) \cong \mathbb{Z}/3\mathbb{Z}$. 
\end{proof}

\vspace{0.2cm}

A Fano threefold $X$ in family \textnumero 2.32 is a divisor of bidegree $(n,m)=(1,1)$ on $\pp^2\times\pp^2$. The anticanonical divisor is $-K_X= 2\iota^*H_1+2\iota^*H_2$.

\begin{lemma}  \label{Am:2.32}
     Let $X$ be a Fano threefold in \textnumero 2.32 and $G\subseteq \aut(X)$ be a finite group. Then $\am(X,G)$ is isomorphic to $\zz/3\zz$ or trivial.
\end{lemma}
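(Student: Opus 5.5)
We follow the same strategy as for Lemmas \ref{Am:2.6a} and \ref{Am:2.24}: first bound $\am(X,G)$ from above using linearizable classes, and then realize the bound with an explicit invariant divisor of bidegree $(1,1)$. Here $X$ is the flag variety $W_6$, a smooth $(1,1)$-divisor in $\pp^2\times\pp^2$. Every automorphism of $X$ is induced from $\aut(\pp^2\times\pp^2)$, since the two projections are the two extremal contractions. Also $\autp(X)=\zz/2\zz$.

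\emph{Case $\autp(X,G)=\zz/2\zz$.} The two hyperplane classes are swapped, so $\pic(X)^G=\zz[\iota^*H_1+\iota^*H_2]$ and $-K_X=2(\iota^*H_1+\iota^*H_2)$. This only gives $\am(X,G)\subseteq\zz/2\zz$, so we need a better argument.
\begin{itemize}
\item Write $X=\{\sum x_iy_i=0\}$ after a coordinate change, so that $H^0(X,\iota^*H_1+\iota^*H_2)$ is the $8$-dimensional space of $(1,1)$-forms modulo the equation, namely $\mathfrak{sl}_3$.
\item A switching element acts on this space as a linear map up to scalar; composing with the swap of factors, the whole action lands in $\aut(\mathfrak{sl}_3)$.
\item Concretely, $G$ embeds in $\operatorname{PGL}_3\rtimes\zz/2\zz=\aut(\mathfrak{sl}_3)\subset \operatorname{GL}(\mathfrak{sl}_3)$, which is a genuine linear action on $H^0(X,\iota^*H_1+\iota^*H_2)$ compatible with the action on $X$.
\item Hence $\iota^*H_1+\iota^*H_2$ is $G$-linearizable, and $\am(X,G)=0$.
\end{itemize}
If this identification is awkward to make precise, a fallback is to use Proposition \ref{Gsubrepn_result}: an $8$-dimensional representation only gives period dividing $\gcd(8,2)$, which is not enough. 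So the lifting to $\aut(\mathfrak{sl}_3)$ is really needed, and I would state it through the adjoint representation. This is the main obstacle.

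\emph{Case $\autp(X,G)=0$.} By Remark \ref{remark:equivariantprojections}, both $3\iota^*H_1$ and $3\iota^*H_2$ are linearizable, and $-K_X=2\iota^*H_1+2\iota^*H_2$ is linearizable by Proposition \ref{KXlinearizable}. It follows that $2(\overline{\iota^*H_1}+\overline{\iota^*H_2})=0$. Together with $3(\overline{\iota^*H_1}+\overline{\iota^*H_2})=0$ this gives $\overline{\iota^*H_2}=-\overline{\iota^*H_1}$, so $\am(X,G)\subseteq\langle\overline{\iota^*H_1}\rangle\cong\zz/3\zz$.

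\emph{Sharpness.} Take the Heisenberg-type group $\mathcal{G}\cong(\zz/3\zz)^2$ generated by the following two automorphisms, where $\omega$ is a primitive cube root of unity:
\begin{align*}
\sigma &: ([x_0:x_1:x_2],[y_0:y_1:y_2]) \mapsto ([x_0:\omega x_1:\omega^2x_2],[y_0:\omega^2y_1:\omega y_2]),\\
\tau &: ([x_0:x_1:x_2],[y_0:y_1:y_2]) \mapsto ([x_1:x_2:x_0],[y_1:y_2:y_0]).
\end{align*}
This is the contragredient action on the second factor. The smooth divisor $x_0y_0+x_1y_1+x_2y_2=0$ is $\mathcal{G}$-invariant and $\autp(X,\mathcal{G})=0$. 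The standard lifts of $\sigma$ and $\tau$ to $\operatorname{GL}_3$ commute only up to $\omega$, so the action on the first factor does not lift to $\operatorname{GL}_3(\cc)$. Restricting to $X$, the sections $H^0(X,\iota^*H_1)=H^0(\pp^2,\mathcal{O}(1))$ form an irreducible projective representation of dimension $3$ with nontrivial class. Therefore $\iota^*H_1$ is not $\mathcal{G}$-linearizable, and $\am(X,\mathcal{G})\cong\zz/3\zz$.
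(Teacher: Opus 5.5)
Your proposal is correct. The case $\autp(X,G)=0$ matches the paper's. The differences are in the swapping case and in the sharpness example.

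\textbf{The swapping case.} For $\autp(X,G)=\zz/2\zz$ the paper needs no representation theory. You yourself note that every automorphism of $X$ extends to $\pp^2\times\pp^2$ with $\iota$ equivariant. So both $K_{\pp^2\times\pp^2}$ and $K_X$ are canonically linearized, and adjunction gives
\[
\iota^*H_1+\iota^*H_2=\iota^*(-K_{\pp^2\times\pp^2})+K_X .
\]
Hence the generator of $\pic(X)^G$ is linearizable in one line. The same identity gives $\overline{\iota^*H_2}=-\overline{\iota^*H_1}$ at once when $\autp(X,G)=0$.

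Your route through $X\subset\pp(\mathfrak{sl}_3)$ and $\aut(X)=\operatorname{PGL}_3\rtimes\zz/2\zz=\aut(\mathfrak{sl}_3)$ is also valid. Concretely, the maps $v\otimes w\mapsto gv\otimes g^{-T}w$ and $v\otimes w\mapsto w\otimes v$ give a linear action of $\operatorname{PGL}_3\rtimes\zz/2\zz$ on $\cc^3\otimes\cc^3$, because scalars act trivially. This action linearizes $\mathcal{O}(1,1)$ on the Segre image. Your approach explains conceptually why the obstruction vanishes: $X$ is the adjoint variety of $\mathfrak{sl}_3$. The cost is that you must identify $\aut(X)$ and check that the chosen lifts form a group compatible with the embedding. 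That is exactly the obstacle you flagged, and the adjunction identity sidesteps it entirely.

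\textbf{Sharpness.} Your example is the Heisenberg group embedded by $g\mapsto(g,g^{-T})$, acting on $\sum x_iy_i=0$; every step can be checked by hand. The paper instead reuses the group \eqref{2-6agroup}, whose generators act by a cyclic shift on one factor and a diagonal clock on the other. It pairs this with a different invariant $(1,1)$-form and checks smoothness in Magma. All smooth $(1,1)$-divisors are projectively equivalent, so your choice loses nothing and is more transparent. Your non-linearizability argument, via the nontrivial Heisenberg class on $H^0(X,\iota^*H_1)=H^0(\pp^2,\mathcal{O}(1))$, is the same reasoning the paper uses.
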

\begin{proof}
Let $G\subseteq \aut(X)$ be a finite group. Suppose that $\autp(X,G)=\zz/2\zz$, then $\pic(X)^G= \zz[\iota^*H_1+\iota^*H_2]$. We observe that $\iota^*H_1+\iota^*H_2$ is $G$-linearizable from 
\[
\iota^*H_1+\iota^*H_2 = \iota^*(-K_{\pp^2\times\pp^2})+K_X
\]
and Proposition \ref{KXlinearizable}. Thus, $\am(X,G)=0$. 

If $\autp(X,G)=0$, then using Remark \ref{relation-for-2.6(a)}, we have
\[
\overline{\iota^*H_1} =2\overline{\iota^*H_2},\; \overline{\iota^*H_2}=2\overline{\iota^*H_1},
\]
in $\am(X,G)$. Therefore, $\am(X,G)$ is isomorphic to a subgroup of $\langle \overline{\iota^*H_1}\rangle \cong\zz/3\zz$. 
Consider the group $\mathcal{G}$ generated by automorphisms in \eqref{2-6agroup}, and the hypersurface 
\[
(x_0y_0 + x_1y_0 + x_2y_0 + x_0y_1 + x_0y_2) + \omega(x_1y_2+x_2y_1) + \omega^2(x_1y_1+x_2y_2) =0
\]
This hypersurface is $\mathcal{G}$-invariant. Moreover, a computation using Magma (\cite{magma}) shows that it is smooth and irreducible, and therefore belongs to family \textnumero 2.32. Now since the $\mathcal{G}$-action does not lift to a group homomorphism to $\operatorname{GL}_3(\cc)^{2}$, $\iota^*H_1$ and $\iota^*H_2$ are not $\mathcal{G}$-linearizable. So, we have $\am(X,\mathcal{G}) \cong \zz/3\zz$. 
\end{proof}

\section{\texorpdfstring{Family \textnumero 3.2}{Family 3.2}}
Let $X$ denote a Fano threefold in the family \textnumero $3.2$. Let $\mathcal{Q}$ be the $\pp^2$-bundle
\[
\pp(\mathcal{O}_S\oplus \mathcal{O}_S(-1,-1)^{\oplus 2})
\]
over $S:=\pp^1 \times \pp^1$. Let $f:\mathcal{Q}\to S$ be the natural projection. Let $\mathcal{L}$ denote the tautological line bundle on $\mathcal{Q}$. Then Mori-Mukai describe $X$ as a member of $|\mathcal{L}^{\otimes 2} \otimes \mathcal{O}(2,3)|$ such that $X\cap Y$ is irreducible and $Y\in |\mathcal{L}|$ \cite[Table 3]{MM81classtable}. 

Let $\iota : X \to \mathcal{Q} $ be the inclusion map. We use $\zeta$ to denote the divisor class corresponding to $[\mathcal{L}]$. We also write $H_1$ and $H_2$ for the divisor classes corresponding to $[\mathcal{O}(1,0)]$ and $[\mathcal{O}(0,1)]$ on $S$, respectively. 
Then
\begin{align*}
-K_{\mathcal{Q}} = 3\zeta - f^*(K_S+\operatorname{det}(\mathcal{O}\oplus\mathcal{O}(-1,-1)^{\oplus 2}) &= 3\zeta - f^*(K_S-2H_1-2H_2)\\
&= 3\zeta + f^*(4H_1+4H_2).
\end{align*}
For the ease in notation, let us write $F_1 := \iota^*f^*H_1$, $F_2:=\iota^*f^*H_2$, $\xi = \iota^*\zeta$. Note that $\pic(X)= \zz[\xi]\oplus\zz[F_1]\oplus\zz[F_2]$. Using the adjunction formula, 
\begin{equation}  \label{anticanonicalfor3.2}
\begin{split}
-K_X &= \iota^*(3\zeta+f^*(4H_1+4H_2)-X) \\
&= \iota^*(3\zeta+f^*(4H_1+4H_2)-2\zeta-f^*(2H_1+3H_2))\\
&= \iota^*(\zeta+f^*(2H_1+H_2)) = \xi + 2F_1+F_2.
\end{split}
\end{equation}

\begin{lemma}   \label{Am:3.2upper}
   Let $X$ be a smooth Fano threefold in \textnumero 3.2. Then for any finite group $G\subseteq \aut(X)$, we have $\am(X,G)\subseteq \zz/2\zz$.
\end{lemma}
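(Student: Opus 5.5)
The plan is to find enough $G$-linearizable classes in $\pic(X)=\zz[\xi]\oplus\zz[F_1]\oplus\zz[F_2]$ to cut the quotient down to a cyclic group of order at most $2$. We use three sources of linearizable classes: the canonical class (Proposition \ref{KXlinearizable}), pullbacks of canonical classes along equivariant morphisms, and Proposition \ref{Gsubrepn_result} applied to a line bundle with a one-dimensional space of sections.

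First I will use the classification of $\autp(X)$ recorded in Table \ref{table:primitive}. I expect $\autp(X)=0$ for \textnumero 3.2, since the bidegree $(2,3)$ breaks the symmetry of $S=\pp^1\times\pp^1$ and the class $\xi$ is distinguished. Then $\pic(X)^G=\pic(X)$ for every finite $G$. Moreover, each extremal contraction of $X$ is $G$-equivariant, in particular the conic bundle $f\circ\iota: X\to S$. Since $G$ acts trivially on $\pic(S)$, it preserves both rulings, so the two compositions $X\to S\to\pp^1$ are $G$-equivariant. As in Remark \ref{remark:equivariantprojections}, the pullbacks $2F_1$ and $2F_2$ of $-K_{\pp^1}$ are then $G$-linearizable. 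Together with \eqref{anticanonicalfor3.2} and Proposition \ref{KXlinearizable}, this gives in $\am(X,G)$ the relations
\[
2\overline{F_1}=2\overline{F_2}=0,\qquad \overline{\xi}+2\overline{F_1}+\overline{F_2}=0 .
\]

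Second, I will show that $\xi$ itself is $G$-linearizable by proving $h^0(X,\xi)=1$. Then $H^0(X,\xi)$ is a one-dimensional $G$-representation, and Proposition \ref{Gsubrepn_result} gives $\partial(\xi)=0$. To compute $h^0(X,\xi)$, I will use the restriction sequence
\[
0\to \mathcal{O}_{\mathcal{Q}}(\zeta - X)\to \mathcal{O}_{\mathcal{Q}}(\zeta)\to \iota_*\mathcal{O}_X(\xi)\to 0 .
\]
Here $\zeta-X=-\zeta-f^*(2H_1+3H_2)$. Since $\mathcal{O}_{\mathcal{Q}}(-\zeta)$ restricts to $\mathcal{O}(-1)$ on every $\pp^2$-fibre, all $R^if_*\mathcal{O}_{\mathcal{Q}}(-\zeta)$ vanish, so $H^0$ and $H^1$ of the kernel are zero. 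Therefore $H^0(X,\xi)\cong H^0(\mathcal{Q},\zeta)=H^0(S,\mathcal{O}_S\oplus\mathcal{O}_S(-1,-1)^{\oplus 2})=\cc$.

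Combining: $\overline{\xi}=0$, hence $\overline{F_2}=-2\overline{F_1}=0$. So $\am(X,G)$ is generated by $\overline{F_1}$, which satisfies $2\overline{F_1}=0$, and therefore $\am(X,G)\subseteq\zz/2\zz$.

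The main obstacle is the first step, namely justifying that $G$ acts trivially on $\pic(X)$ and preserves the two rulings of $S$. If the table instead allowed a nontrivial $\autp(X,G)$, for instance one swapping $F_1$ and $F_2$, I would handle that case separately. In that case $\pic(X)^G$ has smaller rank, the invariant lattice is spanned by $\xi$ and classes built from $F_1,F_2$, and the same two ingredients, $\partial(\xi)=0$ and linearizability of $-K_X$, should already force the quotient to have order at most $2$.
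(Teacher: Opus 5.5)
Your proof is correct. It differs from the paper's only in how you show $\partial(\xi)=0$. The other steps are the same as in the paper: $2F_1$ and $2F_2$ are linearizable via the two rulings, and $\overline{F_2}=0$ follows from \eqref{anticanonicalfor3.2}.

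The paper first reduces to $G\subseteq\aut(\mathcal{Q};X)$, so that $\iota$ and $f$ are equivariant. It then gets $\xi$ formally from the linearizability of $\iota^*(-K_{\mathcal{Q}})$, $-K_X$ and $2F_2$, using the identity $\iota^*(-K_{\mathcal{Q}})+2K_X=\xi+2F_2$.

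You instead stay on $X$. You compute $h^0(X,\xi)=1$ from the restriction sequence and the fibrewise vanishing of $\mathcal{O}(-1)$ on $\pp^2$, and then apply Proposition \ref{Gsubrepn_result}. Strictly, $H^0(X,\xi)$ is a representation of the extension $G_\xi$ rather than of $G$, but that is exactly the setting of the proposition.

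What each route buys:
\begin{itemize}
\item The paper's route needs no cohomology. However, it uses without comment that the $G$-action extends to the ambient bundle $\mathcal{Q}$.
\item Your route needs only the $G$-invariance of $\xi$. The same vanishing computation also gives $(f\circ\iota)_*\mathcal{O}_X(\xi)\cong\mathcal{O}_S\oplus\mathcal{O}_S(-1,-1)^{\oplus 2}$, which is one way to justify the paper's reduction to $\aut(\mathcal{Q};X)$.
\end{itemize}

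You should state that your cohomology step uses the Grothendieck convention $f_*\mathcal{O}_{\mathcal{Q}}(\zeta)=\mathcal{O}_S\oplus\mathcal{O}_S(-1,-1)^{\oplus 2}$. This is the convention matching the paper's formula for $-K_{\mathcal{Q}}$, and it gives $(-K_X)^3=14$. With the opposite convention $h^0(\zeta)=9$ and the step would fail.

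Your contingency paragraph about a nontrivial $\autp(X,G)$ is unnecessary. Table \ref{table:primitive} gives $\autp(X)=0$ for \textnumero 3.2, and the paper relies on this as well.
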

\begin{proof}
    Let $G\subseteq \aut(X)$ be a finite group. Since $\autp(X)=0$ (Table \ref{table:primitive}), we may assume that $G$ is a subgroup of $\aut(\mathcal{Q};X)$ such that it induces trivial action on $\pic(\mathcal{Q})= f^*\pic(S)\oplus \zz[\zeta]$. Then both $\iota$ and $f$ are $G$-equivariant. Therefore, Proposition \ref{KXlinearizable} implies that $2F_1$, $2F_2$, $\iota^*(-K_{\mathcal{Q}})$, and $-K_X$ are $G$-linearizable on $X$.
Moreover, $\xi$ is $G$-linearizable, since
\[
\iota^*(-K_{\mathcal{Q}}) + 2K_X = \xi + 2F_2.
\]
Consequently,
\[
\am(X,G) \subseteq \{0, \overline{F_1}, \overline{F_2}, \overline{F_1+F_2}\}. 
\]
However, from \eqref{anticanonicalfor3.2}, we have $\overline{F_2}=0$. Therefore, $\am(X,G)\subseteq \zz/2\zz$. 
\end{proof}


\section{Double covers}  \label{divisorontoric}
In this section, we discuss the Amitsur groups of the following families of smooth Fano threefolds
\begin{center}
   \textnumero 2.2,\; \textnumero 2.8,\; \textnumero 2.18,\; \textnumero 3.1, \; \textnumero 3.4.
\end{center}

\subsection{\texorpdfstring{Double Covers of $\pp^2\times\pp^1$}{Double covers of P2P1}}
If $X$ is a member of \textnumero 2.2 or \textnumero 2.18, it can be described as a double cover of the Fano threefold $\pp^2\times \pp^1$ (\cite{MM81classtable}).
Suppose that $\pi : X \to \pp^2\times\pp^1$ is a double covering with smooth branch locus $B$. Let $H_1$ and $H_2$ denote the pullbacks to $\pp^2\times\pp^1$ of the hyperplanes on $\pp^2$ and $\pp^1$, respectively. Then $\pic(\pp^2\times\pp^1) = \zz[H_1]\oplus\zz[H_2]$. Further, if $B$ is ample then from \cite[Theorem 3.8]{MM86} 
\[
\pic(X)= \zz [\pi^*H_1]\oplus \zz[\pi^*H_2], \hspace{0.7cm} \overline{\operatorname{NE}}(X) \cong \overline{\operatorname{NE}}(\pp^2\times\pp^1).  
\]
So the Mori cone $\overline{\operatorname{NE}}(X)$ has two distinct generators.
In the diagram below, we denote the extremal contractions of $X$ by $f_1: X \to \pp^2$ and $f_2: X\to \pp^1$. We also write $\operatorname{pr}_1$ and $\operatorname{pr}_2$ for the canonical projection maps of $\pp^2\times\pp^1$ to $\pp^2$ and $\pp^1$, respectively.
\[\begin{tikzcd}
	& X \\
	& {\mathbb{P}^2\times\mathbb{P}^1} \\
	{\mathbb{P}^2} && {\mathbb{P}^1}
	\arrow["\pi"', from=1-2, to=2-2]
	\arrow["{f_1}"', curve={height=12pt}, from=1-2, to=3-1]
	\arrow["{f_2}", curve={height=-12pt}, from=1-2, to=3-3]
	\arrow["{pr_2}", from=2-2, to=3-3]
	\arrow["{pr_1}", tail reversed, no head, from=3-1, to=2-2]
\end{tikzcd}\]

\begin{remark}  \label{double-cov-equivariant}
\normalfont Observe that $f_i$ is induced by the linear system $|\pi^*H_i|$ and $f_i = \operatorname{pr}_i\circ\pi$ for $i=1,2$. Let $G\subseteq \aut(X)$ be a finite group. If $X$ is in \textnumero 2.2 or \textnumero 2.18, $\autp(X)=0$ (Table \ref{table:primitive}), and we have 
\[
\pic(X)^G = \pic(X)
\]
So, $\pi^*H_i$ is $G$-invariant and consequently, $f_i$ is $G$-equivariant for $i=1,2$. It suffices to consider only those groups $G$ that appear as the preimage of a subgroup $\widetilde{G}\subseteq \aut(\pp^2\times\pp^1;B)$. This implies, $\operatorname{pr}_i$ is $G$-equivariant, and so, the double covering $\pi$ is $G$-equivariant. In addition, the pullbacks of canonical divisors along $G$-equivariant maps, 
\[
\pi^*\operatorname{pr}_1^*K_{\pp^2}=-3\pi^*H_1, \; \; \pi^*\operatorname{pr}_2^*K_{\pp^1} = -2\pi^*H_2
\]
are also $G$-linearizable on $X$ by Proposition \ref{KXlinearizable}.
\end{remark} 

Let us denote the image of $\pi^*H_i$ under the map $\pic(X)^G\to \am(X,G)$ by $\overline{\pi^*H_i}$, $i=1,2$. Then 
\[
\am(X,G) \subseteq \langle \overline{\pi^*H_1}, \overline{\pi^*H_2}\rangle \cong \zz/6\zz.
\]

\begin{lemma}  \label{Am:2.2}
    Let $X$ be a smooth Fano threefold in family \textnumero $2.2$. Then for any finite group $G\subseteq \aut(X)$, we have $\am(X,G)=0$.
\end{lemma}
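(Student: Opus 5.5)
The plan is to combine the $G$-linearizable classes from Remark \ref{double-cov-equivariant} with the canonical class, and to show that together they generate all of $\pic(X)^G$.

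First I will compute $-K_X$ in terms of $\pi^*H_1$ and $\pi^*H_2$. By the Mori--Mukai description, a member of \textnumero 2.2 is a double cover $\pi\colon X\to\pp^2\times\pp^1$ branched along a smooth divisor $B$ of bidegree $(4,2)$. Here the degree is $4$ on the $\pp^2$ factor and $2$ on the $\pp^1$ factor, so $B\sim 4H_1+2H_2$, which is ample; hence the description of $\pic(X)$ recalled above applies. The Hurwitz formula for a double cover gives $K_X=\pi^*\bigl(K_{\pp^2\times\pp^1}+\tfrac12 B\bigr)$. Therefore
\[
-K_X=\pi^*(3H_1+2H_2)-\pi^*(2H_1+H_2)=\pi^*H_1+\pi^*H_2 .
\]
This class is ample, which is consistent with $X$ being Fano. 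Before using it I will double-check the bidegree convention against \cite{MM81classtable}, since the opposite convention would give a non-ample class.

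Next, let $G\subseteq\aut(X)$ be finite. By Remark \ref{double-cov-equivariant}, $\pic(X)^G=\pic(X)$ and the maps $\pi$, $f_1$, $f_2$ are $G$-equivariant. Consequently $3\pi^*H_1$ and $2\pi^*H_2$ are $G$-linearizable, and $-K_X$ is $G$-linearizable by Proposition \ref{KXlinearizable}. In $\am(X,G)$ this yields three relations:
\[
3\,\overline{\pi^*H_1}=0,\qquad 2\,\overline{\pi^*H_2}=0,\qquad \overline{\pi^*H_1}+\overline{\pi^*H_2}=0 .
\]
The third relation gives $\overline{\pi^*H_1}=-\overline{\pi^*H_2}$. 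Hence the order of $\overline{\pi^*H_1}$ divides both $3$ and $2$, so $\overline{\pi^*H_1}=0$, and then $\overline{\pi^*H_2}=0$ as well.

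Since $\pic(X)=\zz[\pi^*H_1]\oplus\zz[\pi^*H_2]$, the classes $\overline{\pi^*H_1}$ and $\overline{\pi^*H_2}$ generate $\am(X,G)$. It follows that $\am(X,G)=0$ for every finite $G$.

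The only step needing care is the equivariance claim in Remark \ref{double-cov-equivariant}, which lets us treat $\operatorname{pr}_i\circ\pi$ as $G$-equivariant maps so that the pulled-back canonical classes are linearizable. This follows from $\autp(X)=0$ together with the fact that $f_i$ is the morphism given by $|\pi^*H_i|$. Beyond that, the argument reduces to the arithmetic observation that $\gcd(2,3)=1$.
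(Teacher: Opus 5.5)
Your proof is correct and follows essentially the paper's route: both combine the $G$-linearizability of $K_X$ with that of canonical classes pulled back along $G$-equivariant maps. The only difference is bookkeeping. The paper uses the ramification divisor $R$ to write $\pi^*H_1 = R + K_X = 2K_X - \pi^*K_{\pp^2\times\pp^1}$, so it needs only that $\pi$ is equivariant, since $K_X$ and $\pi^*K_{\pp^2\times\pp^1}$ already span $\pic(X)$. You instead use $3\pi^*H_1$ and $2\pi^*H_2$ separately and conclude via $\gcd(2,3)=1$.
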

\begin{proof}
If $X$ is in family \textnumero 2.2, the branch locus $B$ is a divisor of bidegree $(4,2)$ on $\pp^2\times\pp^1$. The canonical divisor is 
\begin{align*}
    K_X &= \pi^*(-3H_1-2H_2 +(2H_1+H_2)) = -\pi^*H_1-\pi^*H_2
\end{align*}
by the Hurwitz formula (\cite[Theorem 3.8]{MM86}) and it is $G$-linearizable for any $G$ from Proposition \ref{KXlinearizable}. Since $\pi$ is $G$-equivariant from Remark \ref{double-cov-equivariant}, the ramification divisor $R:= K_X-\pi^*K_Y = \pi^*(2H_1+H_2)$ is $G$-linearizable on $X$.
Note that
\[
\pi^*H_1 = R + K_X, \hspace{0.7cm} \pi^*H_2 = -K_X-\pi^*H_1
\]
so $\pi^*H_1$, and consequently, $\pi^*H_2$ is $G$-linearizable. Thus, $\am(X,G)=0$. 
\end{proof}

\begin{lemma}   \label{Am:2.18}
    Let $X$ be a smooth Fano threefold in family \textnumero $2.18$. Then for any finite group $G\subseteq \aut(X)$, we have $\am(X,G)=0$.
\end{lemma}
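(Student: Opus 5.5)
The plan is to follow the proof of Lemma \ref{Am:2.2} verbatim, changing only the numerics to match the branch divisor of family \textnumero 2.18. For \textnumero 2.18, the branch locus $B$ of the double cover $\pi: X\to \pp^2\times\pp^1$ is a smooth divisor of bidegree $(2,2)$. Since $B$ is ample, $\pic(X)=\zz[\pi^*H_1]\oplus\zz[\pi^*H_2]$. By Remark \ref{double-cov-equivariant} (which uses $\autp(X)=0$), we may take $G$ to be the preimage of a subgroup of $\aut(\pp^2\times\pp^1;B)$. Then $\pi$, $\operatorname{pr}_1$ and $\operatorname{pr}_2$ are $G$-equivariant, and $\pic(X)^G=\pic(X)$.

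First I compute $K_X$ by the Hurwitz formula \cite[Theorem 3.8]{MM86}:
\[
K_X=\pi^*\bigl(K_{\pp^2\times\pp^1}+\tfrac12 B\bigr)=\pi^*(-3H_1-2H_2+H_1+H_2)=-2\pi^*H_1-\pi^*H_2 .
\]
By Proposition \ref{KXlinearizable}, both $K_X$ and $K_{\pp^2\times\pp^1}$ carry canonical linearizations. Since $\pi$ is $G$-equivariant, the pullback $\pi^*K_{\pp^2\times\pp^1}$ is $G$-linearizable. Hence the ramification class
\[
R:=K_X-\pi^*K_{\pp^2\times\pp^1}=\pi^*(H_1+H_2)
\]
is $G$-linearizable, because $\pic(X,G)$ is a group.

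Next I take integer combinations of these linearizable classes:
\[
K_X+2R=\pi^*H_2,\qquad -(K_X+R)=\pi^*H_1 .
\]
Both generators of $\pic(X)^G=\pic(X)$ are therefore in the image of $\pic(X,G)\to\pic(X)^G$, so $\am(X,G)=0$. As a consistency check, this agrees with Remark \ref{double-cov-equivariant}, which already gives that $3\pi^*H_1$ and $2\pi^*H_2$ are linearizable.

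The only point needing care is the reduction to groups lifted from $\aut(\pp^2\times\pp^1;B)$, so that $\pi$ is genuinely equivariant. This is exactly what Remark \ref{double-cov-equivariant} provides, using that $f_i$ is given by $|\pi^*H_i|$ and that these classes are $G$-invariant. The rest is the linear algebra above.
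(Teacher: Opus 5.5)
Your proof is correct and is essentially the paper's argument. Both reduce, via Remark~\ref{double-cov-equivariant}, to the case where $\pi$ is $G$-equivariant, and both then write the generators of $\pic(X)$ as integer combinations of canonical classes pulled back along equivariant maps. The only difference is which classes are combined. You use $K_X$ and $\pi^*K_{\pp^2\times\pp^1}$ (the ramification-class trick from Lemma~\ref{Am:2.2}), which already form a $\zz$-basis of $\pic(X)$. The paper instead combines $K_X$ with the separate pullbacks $-3\pi^*H_1$ and $-2\pi^*H_2$ of $K_{\pp^2}$ and $K_{\pp^1}$, via $\pi^*H_1=-2K_X+\pi^*\operatorname{pr}_2^*K_{\pp^1}+\pi^*\operatorname{pr}_1^*K_{\pp^2}$.
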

\begin{proof}
If $X$ is in family \textnumero 2.18, the branch locus $B$ is a divisor of bidegree $(2,2)$ on $\pp^2\times\pp^1$. By the Hurwitz formula, the canonical divisor of $X$ is 
\[
K_X = -\pi^*(3H_1+2H_2)+\pi^*(H_1+H_2) = -2\pi^*H_1-\pi^*H_2,
\]
and it is $G$-linearizable.
Moreover, $\pi^*\operatorname{pr}_2^*(K_{\pp^1})$ and $\pi^*\operatorname{pr}_1^*(K_{\pp^2})$ are also $G$-linearizable (Remark \ref{double-cov-equivariant}).
Finally, since
\begin{align*}
\pi^*H_1 &= -2K_X + \pi^*\operatorname{pr}_2^*(K_{\pp^1})+ \pi^*\operatorname{pr}_1^*(K_{\pp^2}),
\end{align*}
it follows that $\pi^*H_1$ is $G$-linearizable. Thus, $\pi^*H_2$ is also $G$-linearizable, and we get $\am(X,G)=0$.
\end{proof}

\smallskip
As a result, we can determine the Amitsur groups of Fano threefolds in family \textnumero 3.4.
By \cite[Table 2]{MM81classtable}, a member $X'$ of family \textnumero 3.4 can be described as the blow-up of a Fano threefold $X$ in family \textnumero $2.18$ along a smooth fiber $C$ of the composition $X \to \mathbb{P}^2\times \mathbb{P}^1 \to \mathbb{P}^2$. Let $\phi: X'\to X$ be the blow-up map, and let $E$ denote the exceptional divisor. Then $\pic(X')= \phi^*\pic(X)\oplus \zz[E]$. 

\begin{coro}  \label{Am:3.4}
Let $X'$ be a smooth Fano threefold in family \textnumero $3.4$. Let $G\subseteq \aut(X')$ be a finite group. Then $\am(X',G)=0$.  
\end{coro}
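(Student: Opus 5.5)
The plan is to deduce the statement from Lemma \ref{Am:2.18} together with the equivariant birational invariance of Theorem \ref{imprimitivebcdp}, using the blow-up description $\phi\colon X'\to X$ with $X$ in family \textnumero 2.18. The first step is to make $\phi$ equivariant. If $G\subseteq \aut(X')$ is finite, I want to know that $G$ preserves the exceptional divisor $E$, so that the $G$-action descends to $X$ and the curve $C$ is $G$-invariant. For this I would use the classification of $\autp(X')$ in Table \ref{table:primitive}. If $\autp(X')$ is trivial, the class $[E]$ is $G$-fixed. Since $E$ is the unique effective divisor in its class (it is the exceptional divisor of an extremal contraction), $E$ itself is $G$-stable. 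The contraction of the corresponding extremal ray is then $G$-equivariant and coincides with $\phi$.

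If instead $\autp(X')$ were nontrivial, then some element of $G$ could carry $E$ to another divisor. In that case I would argue differently: \textnumero 3.4 has another contraction of blow-up type, and I would look for a $G$-stable extremal face instead. I expect this to be the main obstacle. My first attempt would be to check the table and hope that $\autp(X')=0$ for \textnumero 3.4. That seems plausible, since the extremal rays appear to be of distinct types: one blow-up of a curve in 2.18, and the conic bundle or del Pezzo fibration structures.

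Once $\phi$ is $G$-equivariant, I would finish as follows. By Theorem \ref{imprimitivebcdp}, $\am(X',G)=\am(X,G)$. By Lemma \ref{Am:2.18}, $\am(X,G)=0$, where $G$ acts faithfully on $X$ because $\phi$ is birational.

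There is also a more hands-on alternative. We have $\pic(X')=\phi^*\pic(X)\oplus\zz[E]$, and $E$ is $G$-linearizable by Theorem \ref{imprimitivebcdp}. Moreover, $\phi^*\pic(X,G)\to\pic(X',G)$ carries linearizations. Since every class in $\pic(X)$ is linearizable by Lemma \ref{Am:2.18}, every class in $\pic(X')^G=\pic(X')$ is linearizable, so $\am(X',G)=0$.
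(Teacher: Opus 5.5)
Your proposal is correct and is essentially the paper's proof. The paper likewise takes $\autp(X')=\autp(X)=0$ as given (it comes from the classification of $\autp$ and is not proved in the text), concludes that $G$ may be taken to come from $\aut(X;C)$ so that $\phi$ is $G$-equivariant, and finishes with Theorem~\ref{imprimitivebcdp} and Lemma~\ref{Am:2.18}; your ``hands-on'' variant just unpacks that last step.

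One small correction to your plausibility heuristic: apart from the blow-down ray to \textnumero 2.18, the Mori cone of \textnumero 3.4 has two conic-bundle rays, one over $\mathbb{F}_1$ and one over $\pp^1\times\pp^1$, and no del Pezzo fibration ray. So the rays are told apart by their bases rather than by their types. This still gives $\autp(X')=0$ and does not affect your argument.
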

\begin{proof}
    Since $\autp(X')=\autp(X)=0$, it suffices to assume that $G$ is the preimage of a subgroup of $\aut(X;C)$.  
    Then the blow-up map $\phi$ is $G$-equivariant, and from Proposition \ref{imprimitivebcdp} and Lemma \ref{Am:2.18}, we get 
    $\am(X',G)=0$.
\end{proof}

\subsection{Double Cover of \textnumero 2.32} \label{Am:2.6(b)discussion}
Let $W$ denote a Fano threefold in \textnumero 2.32. Recall that $W$ can be described as a divisor of bidegree $(1,1)$ on $\pp^2\times\pp^2$ (\S~\ref{divisorsonP2P2}). A Fano threefold $X$ in \textnumero 2.6(b) is a double cover of $W$ with branch locus $B\in |-K_W|$ \cite{MM81classtable}. Let $\pi: X \to W$ denote the double cover, and let $\operatorname{pr}_i$ denote the natural projections from $\pp^2\times\pp^2$ onto its two factors, for $i=1,2$. Let $\iota : W \hookrightarrow \mathbb{P}^2 \times \mathbb{P}^2$ be the embedding of $W$, and set $H_i := \operatorname{pr}_i^*[\mathcal{O}_{\pp^2}(1)]$ for each $i$.
Then from \cite[Theorem 3.8]{MM86} 
\[
\pic(X) = \zz[\pi^*\iota^*H_1, \pi^*\iota^*H_2] \cong \pic(W). 
\]
Observe that the extremal contractions $f_i: X \to \pp^2$ are induced by $|\pi^*\iota^*H_i|$ and $f_i = \operatorname{pr}_i\circ\iota\circ\pi$.
\[\begin{tikzcd}
	& {\mathbb{P}^2} & \\
	X & W & {\mathbb{P}^2\times\mathbb{P}^2} \\
	& {\mathbb{P}^2}
	\arrow["{f_1}", from=2-1, to=1-2]
	\arrow["\pi", from=2-1, to=2-2]
	\arrow["{f_2}"', from=2-1, to=3-2]
	\arrow["\iota", from=2-2, to=2-3]
	\arrow["{\operatorname{pr}_1}"', from=2-3, to=1-2]
	\arrow["{\operatorname{pr}_2}", from=2-3, to=3-2]
\end{tikzcd}\]
Let $G\subseteq \aut(X)$ be a finite group. From \cite[Lemmas 4.2, 4.6]{sharma2025actionspicardgroupsmooth} (see also Table \ref{table:primitive}), we recall that both $\autp(X)$ and $\autp(W)$ are isomorphic to $\zz/2\zz$, and the $G$-action on $\pic(X)$ is induced by the corresponding action on the generators of $\pic(W)$ and $\pic(\pp^2\times\pp^2)$. Therefore, 
\begin{equation}   \label{eqn:autpssamefor2.6(b)}
\autp(X,G)=\autp(W,G)=\autp(\pp^2\times\pp^2,G)
\end{equation}
for any $G\subseteq \aut(X)$. We write $\overline{\pi^*\iota^*H_i} \in \am(X,G)$ for the image of $\pi^*\iota^*H_i \in \pic(X)^G$.
By the Hurwitz formula, we have
\begin{equation}   \label{eqn:anticanonical-for-2.6(b)}
    K_X = \pi^*(K_W+\frac{B}{2}) = \pi^*\left( \frac{K_W}{2}\right) = -\pi^*(\iota^*H_1+\iota^*H_2). 
\end{equation}

The following lemma establishes an upper bound on $\am(X,G)$.  
\begin{lemma}  \label{Am:2.6b}
    Let $X$ be a Fano threefold in \textnumero 2.6 (b). Let $G\subseteq \aut(X)$ be a finite group. Then $\am(X,G)\subseteq \zz/3\zz$.
\end{lemma}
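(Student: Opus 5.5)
We split according to $\autp(X,G)$, which by \eqref{eqn:autpssamefor2.6(b)} is either trivial or $\zz/2\zz$, and mirror the argument of Lemma \ref{Am:2.32}.

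\emph{Case $\autp(X,G)=\zz/2\zz$.} Here the generator of $\autp(X,G)$ swaps $\pi^*\iota^*H_1$ and $\pi^*\iota^*H_2$. Hence
\[
\pic(X)^G=\zz[\pi^*\iota^*H_1+\pi^*\iota^*H_2].
\]
By \eqref{eqn:anticanonical-for-2.6(b)}, this generator equals $-K_X$. It is therefore $G$-linearizable by Proposition \ref{KXlinearizable}, and so $\am(X,G)=0$.

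\emph{Case $\autp(X,G)=0$.} Now $\pic(X)^G=\pic(X)$ and each $\pi^*\iota^*H_i$ is $G$-invariant. I will first argue that the contractions $f_i$ are $G$-equivariant for a suitable $G$-action on $\pp^2$. Each $f_i$ is the morphism given by the complete linear system $|\pi^*\iota^*H_i|$, and this linear system is $G$-invariant. So $G$ acts on $\pp(H^0(X,\pi^*\iota^*H_i)^\vee)\cong\pp^2$ compatibly with $f_i$. I expect this identification of the target with the standard $\pp^2$, i.e.\ $f_i=\operatorname{pr}_i\circ\iota\circ\pi$ as in the diagram, to be the only point needing care.

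Granting this, pulling back $K_{\pp^2}$ along the $G$-equivariant map $f_i$ and applying Proposition \ref{KXlinearizable} shows that $3\pi^*\iota^*H_i$ is $G$-linearizable for $i=1,2$. Together with the linearizability of $-K_X=\pi^*\iota^*H_1+\pi^*\iota^*H_2$, this gives
\[
\overline{\pi^*\iota^*H_2}=-\overline{\pi^*\iota^*H_1}=2\,\overline{\pi^*\iota^*H_1}, \qquad 3\,\overline{\pi^*\iota^*H_1}=0.
\]
Hence $\am(X,G)$ is generated by $\overline{\pi^*\iota^*H_1}$, which has order dividing $3$. Therefore $\am(X,G)\subseteq\zz/3\zz$.

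Alternatively, one could use that $\pi$ is $G$-equivariant together with Proposition \ref{g-eq-divisors}, since $\pi^*$ is an isomorphism on Picard groups. This would give $\am(X,G)=\am(W,G)$ once we know $G$ acts on $W$. That requires $G$ to descend to $W$, and the kernel of the descent (possibly containing the covering involution) complicates this route. So I prefer the direct argument above.
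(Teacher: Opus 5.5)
Your proof is correct and follows essentially the same route as the paper. It uses the same split on $\autp(X,G)$: in the swap case, $-K_X=\pi^*\iota^*(H_1+H_2)$ generates $\pic(X)^G$; in the trivial case, $f_i^*(-K_{\pp^2})=3\pi^*\iota^*H_i$ is linearizable via the equivariant contractions $f_i$. Your version is slightly leaner than the paper's, since it does not need the equivariance of $\pi$, $\iota$ and $\operatorname{pr}_i$, and it states explicitly the use of $-K_X$ to eliminate $\overline{\pi^*\iota^*H_2}$, which the paper leaves implicit. The point you flag is already supplied by the setup: $f_i$ is the morphism given by the base-point-free complete linear system $|\pi^*\iota^*H_i|$, and $h^0(X,\pi^*\iota^*H_i)=3$.
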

\begin{proof}
  Suppose $\autp(X,G)=0$. Then $\pi^*\iota^*H_i$ is $G$-invariant and $f_i$ is $G$-equivariant, for each $i$. Due to \eqref{eqn:autpssamefor2.6(b)}, both $\operatorname{pr}_i$ and $\iota$ are $G$-equivariant. Thus, $\pi$ is also $G$-equivariant and $f_i^*(-K_{\pp^2})$ is $G$-linearizable. We may now assume that $G$ appears as the preimage of a group $\widetilde{G}\subset \aut(\pp^2\times\pp^2;W)$ fixing $B$.  Thus,
    \[
    \am(X,G) \subseteq \langle \overline{\pi^*\iota^*H_1} \rangle \cong \zz/3\zz.
    \]

  If $\autp(X,G)=\zz/2\zz$, then $\pic(X)^G = \zz[\pi^*\iota^*(H_1+H_2)]$. From \eqref{eqn:anticanonical-for-2.6(b)} and Proposition \ref{KXlinearizable}, we have $\am(X,G)=0$. 
\end{proof}

\subsection{Double Cover of \textnumero 2.35}
Now suppose that $Y$ denotes a smooth Fano threefold in family \textnumero 2.35, then it is the blow-up of $\pp^3$ in a point (\cite{MM81classtable}). Let $f : Y \to \pp^3$ be the blow-up morphism and $E$ denote the exceptional divisor of the blow-up. If $H$ is a general hyperplane on $\pp^3$, then $\pic(Y)=\zz[f^*H, E]$.

Let $X$ denote a member of family \textnumero 2.8, then it is a double cover of Y with a branch locus, $B \in |-K_Y|$. Let $\pi: X \to Y$ denote the double covering. There are two possibilities for $B$--- either (a) $B\cap E$ is smooth, or (b) $B \cap E$ is singular but reduced. 
From \cite[Theorem 3.8]{MM86}, 
\[
\pic(X)=\zz[\pi^*f^*H, \pi^*E], \hspace{0.5cm}\operatorname{rk}\overline{\operatorname{NE}}(X)=2.\] 

Observe that $h^0(\pi^*f^*H)= h^0(f^*H)=4$. In the diagram below, we write $\phi$ for the map induced by the linear system $|\pi^*f^*H|$. Note that $\phi=f\circ \pi$. 
\[\begin{tikzcd}
	X && Y \\
	& {\mathbb{P}^3}
	\arrow["\pi", from=1-1, to=1-3]
	\arrow["\phi"', from=1-1, to=2-2]
	\arrow["f", from=1-3, to=2-2]
\end{tikzcd}\] 
Let $G\subseteq \aut(X)$ be a finite group. Recall for the family \textnumero $2.8$, $\autp(X)=0$ (Table \ref{table:primitive}), so $\pi^*f^*H$ is $G$-invariant. This implies that the map $\phi$ is $G$-equivariant. 
\begin{lemma}  \label{Am:2.8}
    Let $X$ be a smooth Fano threefold in \textnumero $2.8$. Let $G\subseteq \aut(X)$ be a finite group. Then $\am(X,G)=0$. 
\end{lemma}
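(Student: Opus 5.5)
We follow the pattern of Lemmas \ref{Am:2.2} and \ref{Am:2.18}: exhibit enough $G$-linearizable classes in $\pic(X)=\zz[\pi^*f^*H,\pi^*E]$ to generate it. Since $\autp(X)=0$, every $G$ acts trivially on $\pic(X)$, so $\pic(X)^G=\pic(X)$. We may replace $G$ by its image together with the covering involution and assume that $G$ comes from a subgroup $\widetilde G\subseteq \aut(Y;B)$. Enlarging $G$ only shrinks $\pic(X,G)$ relative to $\pic(X)^G$, so proving linearizability for the larger group suffices. The map $\phi=f\circ\pi$ is $G$-equivariant, as already noted. The morphism $f$ is the blow-down of the unique exceptional divisor $E$, which is $\widetilde G$-invariant, so $f$ is $\widetilde G$-equivariant; hence $\pi$ is $G$-equivariant too.

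First, $K_X$ is $G$-linearizable by Proposition \ref{KXlinearizable}. By the Hurwitz formula with $B\in|-K_Y|$,
\[
K_X=\pi^*\Big(K_Y+\tfrac{1}{2}B\Big)=\tfrac12\pi^*K_Y=-\pi^*(2f^*H-E),
\]
using $-K_Y=4f^*H-2E$. Second, the pullbacks $\pi^*K_Y=-\pi^*(4f^*H-2E)$ and $\phi^*K_{\pp^3}=-4\pi^*f^*H$ are $G$-linearizable along the equivariant maps $\pi$ and $\phi$; so is the ramification divisor $R=K_X-\pi^*K_Y$. Third, the exceptional divisor $E$ of the $\widetilde G$-equivariant blow-up is $\widetilde G$-linearizable by Theorem \ref{imprimitivebcdp}, so $\pi^*E$ is $G$-linearizable on $X$. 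Then $2\pi^*f^*H=-K_X+\pi^*E$ is linearizable, and $\pi^*f^*H$ remains to be handled.

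The main obstacle is passing from $2\pi^*f^*H$ to $\pi^*f^*H$, since all the canonical classes involve $f^*H$ with even coefficient. The plan is to use the fixed point $p=f(E)\in\pp^3$, which is $G$-fixed because $E$ is $G$-invariant. The linear system $|\pi^*f^*H|$ satisfies $H^0(X,\pi^*f^*H)\supseteq\pi^*H^0(Y,f^*H)\cong H^0(\pp^3,\mathcal O(1))$, of dimension $4$ (equality holds by $h^0=4$). The hyperplanes through $p$ form a $G$-invariant $3$-dimensional subspace, so the quotient is a $G_{\pi^*f^*H}$-subrepresentation of the dual of dimension $1$. Equivalently, the sections vanishing on $\pi^*E$ give the subrepresentation $H^0(X,\pi^*f^*H-\pi^*E)$ of dimension $3$, and we then compare with the dimension-$1$ quotient. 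Proposition \ref{Gsubrepn_result} (applied to the dual, or to the $1$-dimensional quotient by duality as in Lemma \ref{Am:1.11}) then gives $\partial(\pi^*f^*H)=0$.

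If that duality step is delicate, a fallback is to apply Proposition \ref{Gsubrepn_result} to the $3$-dimensional subspace, which gives that the Amitsur period of $\pi^*f^*H$ divides $3$. We already know it divides $2$, so the period is $1$. Either way $\pi^*f^*H$ and $\pi^*E$ are both linearizable, so $\am(X,G)=0$.
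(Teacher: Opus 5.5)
Your proof is correct. It differs from the paper's in how you show that the pullback of $f^*H$ is linearizable.

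\textbf{The paper's route.} The paper makes the same reduction to $G$ coming from $\widetilde G\subseteq\aut(Y;B)$. It then invokes Theorem~\ref{imprimitivebcdp} together with the toric classification (Table~\ref{table:toric}, family \textnumero 2.35) to conclude $\am(Y,\widetilde G)=0$. So $f^*H$ and $E$ are already $\widetilde G$-linearizable on $Y$, and both are pulled back along the equivariant $\pi$.

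\textbf{Your route.} You treat $\pi^*E$ by the same pullback, but you prove that $\pi^*f^*H$ is linearizable directly on $X$:
\begin{itemize}
\item the Hurwitz formula gives $2\partial(\pi^*f^*H)=0$ via $-K_X+\pi^*E=2\pi^*f^*H$;
\item the $3$-dimensional $G_{\mathcal L}$-stable subspace of sections vanishing along the $G$-invariant divisor $\pi^*E$ (pullbacks of linear forms through the fixed point $p$) gives $3\partial(\pi^*f^*H)=0$ by Proposition~\ref{Gsubrepn_result}.
\end{itemize}

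\textbf{Comparison.} Your argument is self-contained. The paper's is shorter but relies on the toric table, whose entry for \textnumero 2.35 amounts to this same fixed-point argument carried out on $Y$.

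\textbf{Two small remarks.}
\begin{itemize}
\item Your fallback via the $3$-dimensional subspace is the cleaner version, since it is literally the statement of Proposition~\ref{Gsubrepn_result}. The dual/one-dimensional-quotient version needs the standard determinant-based extension of that proposition to quotients.
\item The Hurwitz step is not actually needed. You already note that $\phi^*K_{\pp^3}=-4\pi^*f^*H$ is linearizable, so $4\partial(\pi^*f^*H)=0$. Combined with $3\partial(\pi^*f^*H)=0$, this already gives $\partial(\pi^*f^*H)=0$.
\end{itemize}
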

\begin{proof}
We may assume that $G$ is the preimage of a subgroup $\widetilde{G} \subseteq \aut(Y;B)$. We observe from Theorem \ref{imprimitivebcdp} and Table \ref{table:toric} that $\am(Y,\widetilde{G})=0$, so $f^*H$ and $E$ are both $\widetilde{G}$-linearizable on $Y$. Hence, the blow-up map $f$ is $\widetilde{G}$-equivariant. Consequently, the map $\pi$ is $G$-equivariant, and therefore the pullbacks $\pi^*f^*H$ and $\pi^*E$ are also $G$-linearizable on $X$. Thus, $\am(X,G)=0$.
\end{proof}

\subsection{\texorpdfstring{Double Cover of $\pp^1\times\pp^1\times\pp^1$}{Double Cover of P1P1P1}}
Let $Y:=\pp^1\times\pp^1\times\pp^1$ be a smooth Fano threefold in \textnumero 3.27. If $X$ is in \textnumero $3.1$, it can be described as a double cover of $Y$ with branch locus $B$ a divisor of tridegree (2,2,2) \cite[Table 3]{MM81classtable} . 
 Let $\pi: X \to Y$ denote the double covering, and let $\operatorname{pr}_i: Y \to \pp^1$ denote the projection onto the $i$-th factor, for $i=1,2,3$. 
Set $H_i := \operatorname{pr}_i^*[\mathcal{O}_{\pp^1}(1)]$. Then $B\in |2H_1+2H_2+2H_3|$ and $\pic(Y)= \oplus_{i=1}^3 \zz[H_i]$. Since $B$ is ample, we get (\cite[Theorem 3.8]{MM86}) 
\[
\pic(X)= \bigoplus_{i=1}^3 \zz[\pi^*H_i]
\]
and $\operatorname{rk}\overline{\operatorname{NE}}(X)=3$. Let us write $f_i$ for the map induced by the linear system $|\pi^*H_i|$. 
\[\begin{tikzcd}
	& X \\
	& Y \\
	{\mathbb{P}^1} & {\mathbb{P}^1} & {\mathbb{P}^1}
	\arrow["\pi", from=1-2, to=2-2]
	\arrow["{f_1}"', curve={height=18pt}, from=1-2, to=3-1]
	\arrow["{f_2}", curve={height=-12pt}, dashed, from=1-2, to=3-2]
	\arrow["{f_3}", curve={height=-18pt}, from=1-2, to=3-3]
	\arrow["{\operatorname{pr}_1}"', from=2-2, to=3-1]
	\arrow["{\operatorname{pr}_2}", from=2-2, to=3-2]
	\arrow["{\operatorname{pr}_3}", from=2-2, to=3-3]
\end{tikzcd}\]
where $f_i=\operatorname{pr}_i\circ\pi$. 

Let $G\subseteq \aut(X)$ be a finite group. Recall that both $\autp(X)$ and $ \autp(Y)$ are isomorphic to $S_3$ (Table \ref{table:primitive}), where the $G$-action on the generators of $\pic(Y)$ induces the corresponding action on their pullbacks in $\pic(X)$, therefore, 
\[
\autp(X,G)=\autp(Y,G)
\]
for any $G\subseteq \aut(X)$.
Denote by $\overline{\pi^*H_i} \in \am(X,G)$ the image of $\pi^*H_i \in \pic(X)^G$.

\begin{remark} \hspace{-0.2cm}
 \normalfont 
 \begin{enumerate}
     \item[(i)] Suppose $\autp(X,G)=0$. Then $f_i$ is $G$-equivariant for each $1\leq i \leq 3$. Since $\autp(Y,G)=0$, $\operatorname{pr}_i$ is $G$-equivariant. It suffices to assume that $G$ is the preimage of a subgroup of $\aut(Y;B)$. Consequently, $\pi$ is $G$-equivariant. From Huriwitz formula, $-K_X=\pi^*(H_1+H_2+H_3)$. 
     Thus, $f_i^*(-K_{\pp^1})$ and $K_X$ are both $G$-linearizable from Proposition~\ref{KXlinearizable}, in other words,
  \begin{equation}   \label{eqn:divisorrelationsfor3.1}
  \overline{2\pi^*H_i} =0, \;\; \overline{\pi^*(H_1+H_2+H_3)}=0, \hspace{0.8cm} \text{for each $i$.}
  \end{equation}
Thus, we get 
\[
\am(X,G) \subseteq \langle \overline{\pi^*H_1}, \overline{\pi^*H_2}\rangle \cong (\zz/2\zz)^2.
\]
\item[(ii)] Suppose $\autp(X,G)=\zz/2\zz$. Without loss of generality, we may assume that $G$ permutes the first two generators, $\pi^*H_1$ and $\pi^*H_2$, of $\pic(X)$. Therefore, 
\[
\pic(X)^G= \zz[\pi^*(H_1+H_2),\pi^*H_3]\cong \pic(Y)^G.
\]
Thus $f_3$ and $\operatorname{pr}_3$ are $G$-equivariant, and so is $\pi$. Consequently, the relations in \eqref{eqn:divisorrelationsfor3.1} continue to hold, and we get
\[
\am(X,G) \subseteq \langle \overline{\pi^*H_3}\rangle \cong \zz/2\zz.
\]
\item[(iii)] If $\zz/3\zz\subseteq \autp(X,G)$, then $\pic(X)^G=\zz[-K_X]$. Since $-K_X$ is $G$-linearizable, we get $\am(X,G)=0$. 
\end{enumerate}
\end{remark}

It remains to determine whether the upper bounds on the Amitsur group can be realized in the cases when $\autp(X,G)$ is either isomorphic to $\zz/2\zz$ or trivial. 

Let $([x_0:x_1],[y_0:y_1],[z_0:z_1])$ be coordinates on $Y= \pp^1\times\pp^1\times\pp^1$. 

\begin{lemma}   \label{Am:3.1}
    Let $X$ be a smooth Fano threefold in family \textnumero $3.1$. Let $G\subseteq \aut(X)$ be a finite group. Then $\am(X,G)$ is isomorphic to $(\zz/2\zz)^2$, $\zz/2\zz$, or trivial.
\end{lemma}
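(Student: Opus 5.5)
The upper bounds are already in hand from the preceding Remark. If $\autp(X,G)=0$ we get $\am(X,G)\subseteq(\zz/2\zz)^2$; if $\autp(X,G)\cong\zz/2\zz$ we get $\am(X,G)\subseteq\zz/2\zz$; and if $\autp(X,G)\supseteq\zz/3\zz$, then $\am(X,G)=0$. Every subgroup of $\autp(X)\cong S_3$ falls into one of these cases, so $\am(X,G)$ is always one of $(\zz/2\zz)^2$, $\zz/2\zz$, or $0$. What remains is to exhibit an $X$ and a $G$ realizing $(\zz/2\zz)^2$. Sharpness of $\zz/2\zz$ then follows by passing to a suitable subgroup, or from a separate small example.

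\textbf{Construction.} I would take $\mathcal{G}\subset\aut(Y)$ acting diagonally by a Klein-four type action on each factor, namely $\sigma,\tau$ with
\begin{align*}
\sigma &: ([x_0:x_1],[y_0:y_1],[z_0:z_1])\mapsto([x_0:-x_1],[y_1:y_0],[z_0:-z_1]),\\
\tau &: ([x_0:x_1],[y_0:y_1],[z_0:z_1])\mapsto([x_1:x_0],[y_0:-y_1],[z_1:z_0]).
\end{align*}
The aim is that the image of $\mathcal{G}$ in each $\operatorname{PGL}_2(\cc)$ is the Klein four-group $(\zz/2\zz)^2$, which does not lift to $\operatorname{GL}_2(\cc)$. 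Concretely, the lifts of $\sigma$ and $\tau$ anticommute. Then $\mathcal{G}$ acts trivially on $\pic(Y)$.

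\textbf{Branch divisor.} Next I need a $\mathcal{G}$-invariant smooth divisor $B$ of tridegree $(2,2,2)$. To find one, I take the span of monomials $x^ay^bz^c$ of tridegree $(2,2,2)$ that are semi-invariant for the lifted action with a common character. Examples are $x_0^2y_0^2z_0^2+x_1^2y_1^2z_1^2+\dots$ and products of $\mathcal{G}$-invariant quadrics on each factor. A general member of this invariant linear system should be smooth, by Bertini if the system is base point free; I would check this with Magma as elsewhere in the paper. I then let $\bar{\mathcal{G}}\subseteq\aut(X)$ be the preimage of $\mathcal{G}$ under the double cover.

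\textbf{Computing the Amitsur group.} I must show that $\overline{\pi^*H_1}$, $\overline{\pi^*H_2}$ and $\overline{\pi^*(H_1+H_2)}$ are all nonzero. For each $i$, the map $f_i=\operatorname{pr}_i\circ\pi$ is equivariant, and $\pi^*H_i=f_i^*\mathcal{O}_{\pp^1}(1)$. So it suffices to show that $\mathcal{O}_{\pp^1}(1)$ is not linearizable for the image group. The danger is that pulling back could kill the obstruction: a linearization on $X$ would give one on $\pp^1$ via $H^0(X,\pi^*H_i)=H^0(\pp^1,\mathcal{O}(1))$. Since $\pi_*\mathcal{O}_X=\mathcal{O}_Y\oplus\mathcal{O}_Y(-B/2)$ and $-B/2$ has negative degree, these spaces of sections agree. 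A $\bar{\mathcal{G}}$-linearization of $\pi^*H_i$ then yields a 2-dimensional linear representation lifting the projective action on $\pp^1$, up to the kernel of the cover. The kernel of the cover is the covering involution, which acts on these sections by a scalar, so the anticommutation persists.

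\textbf{Remaining class and main obstacle.} For $\pi^*(H_1+H_2)$ I use that $H^0(X,\pi^*(H_1+H_2))=H^0(\pp^1,\mathcal{O}(1))\otimes H^0(\pp^1,\mathcal{O}(1))$. The lifts on the tensor product commute exactly when the product of the two commutator signs is $+1$. So the choice above is insufficient, and I would modify the construction so that the commutator signs for factors 1 and 2 differ. The cleanest way is to take $\mathcal{G}\cong(\zz/2\zz)^3$ or $(\zz/2\zz)^4$, with one Klein pair acting nontrivially only on the first factor and another pair only on the second. Then the commutator pairing on $\mathcal{G}$ distinguishes the three nonzero classes, and Proposition \ref{Gsubrepn_result} or a direct cocycle argument shows each is non-linearizable. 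Finding a smooth invariant $B$ for such a larger group is the main obstacle; I would first try $B=\{q_1(x)q_2(y)q_3(z)+\dots\}$ built from invariant quadrics $x_0^2\pm x_1^2$, $x_0x_1$, and check smoothness by computer.
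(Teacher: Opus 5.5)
Your first paragraph is the paper's argument (the three cases of the preceding Remark). It already proves the lemma as literally stated, since every subgroup of $(\zz/2\zz)^2$ is one of the three listed groups. You are also right that, to realize $(\zz/2\zz)^2$, one must check $\overline{\pi^*(H_1+H_2)}\neq 0$, and that a group acting through a Klein four-group on every factor cannot achieve this; the paper's proof overlooks this point.

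The genuine gap is the transfer from $Y$ to $X$. You argue that the covering involution $\iota$ ``acts on these sections by a scalar, so the anticommutation persists''. This yields no contradiction. Suppose the lifts $\hat a,\hat b\in\bar{\mathcal G}$ of $a,b\in\mathcal G$ satisfy $\hat a\hat b\hat a^{-1}\hat b^{-1}=\iota$. Then a linearization in which $\iota$ acts by $-1$ on $H^0(X,\pi^*H_i)$ is perfectly compatible with anticommuting matrices. This really happens. Since $X=\{w^2=F\}$ sits in the total space of $\mathcal O_Y(1,1,1)$, the group $\bar{\mathcal G}$ is exactly the extension of $\mathcal G$ by $\langle\iota\rangle$ that acts linearly on that bundle; equivalently, $\pi^*\mathcal O_Y(1,1,1)=-K_X$ is always linearizable. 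By the five-term exact sequence, the kernel of inflation $H^2(\mathcal G,\cc^\times)\to H^2(\bar{\mathcal G},\cc^\times)$ is generated by $\partial_Y(\mathcal O_Y(1,1,1))$. Hence, for every $\mathcal G\subseteq\aut(Y;B)$,
\[
\am(X,\bar{\mathcal G})\cong\am(Y,\mathcal G)/\langle\partial_Y(H_1+H_2+H_3)\rangle .
\]

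Consequently both of your constructions fail. For your first pair $\sigma,\tau$ one has $\partial_Y(H_1)=\partial_Y(H_2)=\partial_Y(H_3)$, the unique nonzero class of $H^2((\zz/2\zz)^2,\cc^\times)$. So every $\pi^*H_i$ is in fact $\bar{\mathcal G}$-linearizable and $\am(X,\bar{\mathcal G})=0$. In your repaired group the Klein pairs act trivially on the third factor, so $\pi^*H_3$ is linearizable and $\overline{\pi^*(H_1+H_2)}=-\overline{\pi^*H_3}=0$. You get at most $\zz/2\zz$: the commutator pairing on $\mathcal G$ detects classes on $Y$, not on $X$.

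What is needed is that $\partial_Y(H_1),\partial_Y(H_2),\partial_Y(H_3)$ be independent, i.e.\ $\am(Y,\mathcal G)\cong(\zz/2\zz)^3$. For instance, let $s\colon[u_0:u_1]\mapsto[u_0:-u_1]$ and $t\colon[u_0:u_1]\mapsto[u_1:u_0]$, and let $\mathcal G=\langle e_1,e_2,e_3\rangle\cong(\zz/2\zz)^3$ act by $e_1=(1,s,s)$, $e_2=(s,1,t)$, $e_3=(t,t,1)$. The $i$-th factor sees a Klein four-group with kernel $\langle e_i\rangle$, so the three classes are independent. With $q_\pm=u_0^2\pm u_1^2$ and $q_0=u_0u_1$, the invariant forms $q_+(x)q_+(y)q_+(z)$, $q_-(x)q_-(y)q_+(z)$, $q_0(x)q_+(y)q_-(z)$, $q_+(x)q_0(y)q_0(z)$ span a system whose general member is smooth. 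This follows from Bertini away from the finitely many base points, at each of which some term vanishes to first order. This gives $\am(X,\bar{\mathcal G})\cong(\zz/2\zz)^2$.

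The paper's own realization rests on the same faulty transfer: ``does not lift to $\operatorname{GL}_2(\cc)^3$'' is a statement about $Y$. For its $\mathcal G_1$ and $\mathcal G_2$ the class $H_1+H_2$ is linearizable on $Y$. Hence $\pi^*H_3=-K_X-\pi^*(H_1+H_2)$ is linearizable, and the Amitsur group of the preimage is trivial.
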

\begin{proof}
 We show the existence of a group $G\subset \aut(Y)$ and a smooth hypersurface $B\subset Y$ of tridegree $(2,2,2)$ defining a branch locus such that $B$ is $G$-invariant, the $G$-action does not lift, and $\autp(Y,G)$ is either trivial or isomorphic to $\zz/2\zz$. \smallskip

First, suppose that $\autp(Y,G)=0$. We show that it is possible to realize $\am(X,\widetilde{G})\cong (\zz/2\zz)^2$  where $\widetilde{G}\subset \aut(X)$ is the preimage of some suitable group $G$. In particular, let $\mathcal{G}_1\cong (\zz/2\zz)^2$ be a group generated by involutions of $Y$
\begin{align*}
        ([x_0:x_1],[y_0:y_1],[z_0:z_1]) &\mapsto ([x_0:-x_1],[y_0:-y_1],[z_0:-z_1]) \\
        ([x_0:x_1],[y_0:y_1],[z_0:z_1])  &\mapsto ([x_1:x_0],[y_1:y_0],[z_1:z_0]).
\end{align*}
Observe that $\autp(Y,\mathcal{G}_1)=0$ and the equation
\begin{equation*}
    \begin{aligned}
& a_0(x_0^2y_0^2z_0^2 + x_1^2y_1^2z_1^2) + a_1(x_0^2y_0^2z_1^2 + 
x_1^2y_1^2z_0^2)+ a_2 y_0y_1z_0z_1(x_0^2+x_1^2) + \\
& a_3(x_0^2y_1^2z_0^2 +
x_1^2y_0^2z_1^2)+ a_4(x_0^2y_1^2z_1^2 + x_1^2y_0^2z_0^2) + 
 a_5x_0x_1z_0z_1(y_0^2+y_1^2)+\\
 & a_6 x_0x_1y_0y_1(z_0^2+z_1^2) = 0
   \end{aligned} 
\end{equation*}
is $\mathcal{G}_1$-invariant. The equation also defines a smooth and irreducible hypersurface $B\subset Y$; for instance, this can be verified for the choice 
$(a_0,a_1,\dots , a_6)= (2,3,-2,5,-3,-2,1)$ using Magma (\cite{magma}). So, $\mathcal{G}_1\subset \aut(Y;B)$ lifts to a subgroup of $\aut(X)$. Since the $\mathcal{G}_1$-action on $Y$ does not lift to a group homomorphism to $\operatorname{GL}_2(\cc)^{3}$, none of $\pi^*H_1$, $\pi^*H_2$, and $\pi^*H_3$ are $\mathcal{G}_1$-linearizable on $X$ . So, $\am(X,\mathcal{G}_1)\cong (\zz/2\zz)^2$. 

\vspace{0.3cm}

Now suppose that $\autp(X,G)=\zz/2\zz$.  In particular, let $G:=\mathcal{G}_2 \cong (\zz/2\zz)^2$ generated by involutions of $Y$
    \begin{align*}
        ([x_0:x_1],[y_0:y_1],[z_0:z_1]) &\mapsto ([y_0:-y_1],[x_0:-x_1],[z_0:-z_1])\\
        ([x_0:x_1],[y_0:y_1],[z_0:z_1])  &\mapsto ([y_1:y_0],[x_1:x_0],[z_1:z_0]).
    \end{align*}
Observe that $\mathcal{G}_2$-action swaps the generators $H_1$ and $H_2$ of $Y$, we have $\pic(Y)^{\mathcal{G}_2}= \zz[H_1+H_2,H_3] \cong \pic(X)^{\mathcal{G}_2}$. 
The divisor $\pi^*H_3$ is indeed not $\mathcal{G}_2$-linearizable since the $\mathcal{G}_2$-action does not lift to a group homomorphism to $\operatorname{GL}_2(\cc)^3$.

Finally, consider the hypersurface $B \subset Y$ 
\begin{equation*}
\begin{aligned}
 & b_0\bigl(x_0^2y_0^2z_0^2 + x_1^2y_1^2z_1^2\bigr)
 + b_1\bigl(x_0^2y_0^2z_1^2 + x_1^2y_1^2z_0^2\bigr)+ \\
 & b_2\bigl(
  x_0^2y_0y_1z_0^2 - x_0x_1y_0^2z_0^2
  + x_0x_1y_1^2z_1^2 - x_1^2y_0y_1z_1^2
\bigr) +\\
  &  b_3\bigl(
  x_0^2y_0y_1z_0z_1 + x_0x_1y_0^2z_0z_1
  + x_0x_1y_1^2z_0z_1 + x_1^2y_0y_1z_0z_1
  \bigr) +\\
  & b_4\bigl(
  x_0^2y_0y_1z_1^2 - x_0x_1y_0^2z_1^2
 + x_0x_1y_1^2z_0^2 - x_1^2y_0y_1z_0^2
  \bigr) +\\
  &  b_5\bigl(
  x_0^2y_1^2z_0^2 + x_0^2y_1^2z_1^2
  + x_1^2y_0^2z_0^2 + x_1^2y_0^2z_1^2
  \bigr) + \\
  & b_6\, z_0z_1\bigl(x_0^2y_1^2 - x_1^2y_0^2\bigr)
  + b_7\, x_0x_1y_0y_1\bigl(z_0^2 + z_1^2\bigr)
  = 0.
\end{aligned}
\end{equation*}
Clearly, it is $\mathcal{G}_2$-invariant. The hypersurface is smooth and irreducible for the choice of coefficients $(b_0, b_1, \dots , b_7)=(2,3,1,5,4,6,2,-3)$, as can be verified using Magma (\cite{magma}). Thus, $\mathcal{G}_2$ lifts to a subgroup of $\aut(X)$, and we get $\am(X,\mathcal{G}_2)=\zz/2\zz$. 

\end{proof}

\newpage
\section{Tables for Amitsur groups}  \label{sec:tables}
In the following tables, column 1 lists the Mori–Mukai names of the deformation families of smooth Fano threefolds. Column 2 lists the possible groups $\autp(X,G)$, i.e., the images of finite groups $G\subseteq \aut(X)$ in $\aut(\pic(X))$. For each entry in column 2, column 3 records the largest possible Amitsur group $\am(X,G)$. Column 4 of Table \ref{table:primitive} lists the references where $\am(X,G)$ has been classified for the corresponding family.

\small{\begin{longtable}   {|p{1.9cm}|p{2.0cm}|p{3.0cm}|p{3.5cm}|}
 \caption{Primitive Fano threefolds} \label{table:primitive}\\ 
    \hline
     MM \textnumero & $\autp(X,G)$  & $\am(X,G)$ &  Reference\\
\hline
     1.1,\dots , 1.10    & 0    & 0 & Lemma \ref{amfor_index=rank=1}\\
\hline
     1.11    & 0    & $0$ & Lemma \ref{Am:1.11} \\
\hline
     1.12  & 0    & $\zz/2\zz$ & Lemma \ref{Am:1.12}  \\
\hline
    1.13  & 0  &    $0$ & Lemma \ref{Am:1.13,1.16} \\
\hline
    1.14  & 0  &    $\zz/2\zz$ & Lemma \ref{Am:1.14} \\
\hline
   1.15  &  0 &    $0$ & Lemma \ref{Am:1.15}  \\
\hline
   1.16  & 0  &    $0$ & Lemma \ref{Am:1.13,1.16} \\
\hline
   1.17  &  0 &    $\zz/4\zz$ & Example \ref{amforP3}\\
\hline
 2.2  & $0$  &   0 & Lemma \ref{Am:2.2}\\
\hline
 2.6 (a)  &  $\zz/2\zz$ &   $0$ & Lemma \ref{Am:2.6a} (see\\
         &    $0$    &          $\zz/3\zz$  &      also Lemma \ref{Am:2.6b})\\
\hline
 2.8  & 0  &    $0$ & Lemma \ref{Am:2.8} \\
\hline
 2.18  &  0 &   $0$ &  Lemma \ref{Am:2.18} \\
\hline
 2.24  & $0$  &  $\zz/3\zz$ & Lemma \ref{Am:2.24} \\
 \hline
 2.32  & $\zz/2\zz$  &  $0$ & Lemma \ref{Am:2.32} \\ 
      &   $0$    &      $\zz/3\zz$ &  \\
\hline
2.34  & $0$  &  $0$ & Table \ref{table:toric}\\
\hline
2.35  & $0$  &  $0$ & Table \ref{table:toric} \\
\hline 
 2.36  & $0$  &  $0$ &  Table \ref{table:toric}\\
\hline
 3.1    &  $0$     &  $(\zz/2\zz)^2$ & Lemma \ref{Am:3.1}\\
        &  $\zz/2\zz$  & $\zz/2\zz$  & \\
       &  $\zz/3\zz$  & $0$  &  \\
       & $S_3$  &   $0$ &  \\
\hline
 3.2  & $0$  &  ?? & Lemma \ref{Am:3.2upper} for upper bound\\
\hline
 3.27  & $S_3$  &  $\zz/2\zz$ &  Table \ref{table:toric}\\
 & $\zz/3\zz$  & $\zz/2\zz$  &  \\
 & $\zz/2\zz$ &  $(\zz/2\zz)^2$ &   \\
 & $0$ &  $(\zz/2\zz)^3$   &   \\
\hline
 3.31  & $\zz/2\zz$  & $0$ & Table \ref{table:toric}\\
 &     $0$  &   $\zz/2\zz$  &  \\
\hline
\end{longtable}}

\pagebreak
\small{\begin{longtable}   {|p{1.5cm}|p{2.0cm}|p{3.0cm}|} 
 \caption{Toric Fano threefolds} \label{table:toric}\\
    \hline
     MM \textnumero & $\autp(X,G)$  & $\am(X,G)$ \\
\hline
   1.17  &  0 &    $\zz/4\zz$\\
\hline
 2.33  & $0$  &  $\zz/2\zz$ \\ 
\hline
 2.34  & $0$  &  $\zz/6\zz$\\  
\hline
 2.35  & $0$  &  $0$ \\
\hline
 2.36  & $0$  &  $0$ \\
\hline
3.25  &  $0$ &  $\zz/2\zz$  \\
      &  $\zz/2\zz$   &   $\zz/4\zz$\\
\hline
3.26  &  $0$ &   $0$   \\
\hline
 3.27  & $S_3$  &  $\zz/2\zz$ \\
 & $\zz/3\zz$  & $\zz/2\zz$   \\
 & $\zz/2\zz$ &  $(\zz/2\zz)^2$   \\
 & $0$ &  $(\zz/2\zz)^3$     \\
\hline
 3.28  & $0$  &  $\zz/2\zz$  \\
\hline
 3.29  & $0$  &  $0$  \\
\hline
 3.30  & $0$  & $0$ \\
\hline
 3.31  & $\zz/2\zz$  & $0$ \\
 &     $0$  &   $\zz/2\zz$   \\
\hline
 4.9 &  $0$ &  $0$  \\
\hline
 4.10  & $\zz/2\zz$ & $\zz/2\zz$  \\
    &  $0$ & $\zz/2\zz$  \\
\hline
 4.11  & $\zz/2\zz$  &  $\zz/2\zz$ \\
 &  $0$  &    $0$    \\
\hline
 4.12  & $0$  & $0$ \\
\hline
 5.2  &  $\zz/2\zz$   & $\zz/2\zz$ \\
   &   $0$  &   $0$   \\
\hline
5.3  & 0 & $\zz/2\zz$ \\
 &   $\zz/2\zz$ & $\zz/2\zz$  \\
 &    $\zz/2\zz$ & $\zz/2\zz$ \\
 &   $\zz/2\zz$ & $(\zz/2\zz)^3$  \\
 &  $\zz/3\zz$ & $\zz/6\zz$  \\
 &    $(\zz/2\zz)^2 $ & $(\zz/2\zz)^2$ \\
 &  $S_3$ & $\zz/2\zz$  \\
 &   $S_3$ & $\zz/6\zz$   \\
 &    $\zz/6\zz$ & $\zz/2\zz$   \\
 &   $D_6$ & $\zz/2\zz$    \\
\hline
\end{longtable}}

\bibliographystyle{alphaurl} 
\bibliography{bibis} 

\end{document}